\newtheorem{theorem}{Theorem}[section]
\newtheorem{lemma}[theorem]{Lemma}
\newtheorem{proposition}[theorem]{Proposition}
\newtheorem{corollary}[theorem]{Corollary}
\newtheorem{remark}[theorem]{Remark}
\newtheorem{definition}[theorem]{Definition}
\numberwithin{equation}{section}
\newcommand{\be}{\begin{equation}}
\newcommand{\ee}{\end{equation}}
\newcommand{\ba}{\begin{array}}
\newcommand{\ea}{\end{array}}
\newcommand{\bea}{\begin{eqnarray}}
\newcommand{\eea}{\end{eqnarray}}
\newcommand{\bee}{\begin{eqnarray*}}
\newcommand{\eee}{\end{eqnarray*}}
\newcommand{\lab}{\label}
\newcommand{\ds}{\displaystyle}
\newcommand{\nn}{\nonumber}
\providecommand{\norm}[1]{\lVert#1\rVert}
\providecommand{\normm}[1]{\left\lVert#1\right\rVert}
\renewcommand{\c}{\cdot}
\newcommand{\les}{\lesssim}
\newcommand{\la}{\lambda}
\newcommand{\dd}{{\bf D}}
\newcommand{\R}{\mathbb{R}}
\newcommand{\MM}{\mathcal{M}}
\renewcommand{\gg}{{\bf g}}
\renewcommand{\th}{\theta}
\newcommand{\ep}{\varepsilon}
\renewcommand{\a}{\alpha}
\renewcommand{\b}{\beta}
\newcommand{\Si}{\Sigma}
\newcommand{\Sit}{\Sigma_t}
\newcommand{\pr}{\partial}
\newcommand{\nab}{\nabla}
\renewcommand{\o}{\omega}
\renewcommand{\S}{\mathbb{S}^2}
\newcommand{\po}{\partial_{\omega}}
\newcommand{\xo}{x\cdot\omega}
\renewcommand{\H}{\mathcal{H}}
\renewcommand{\a}{\alpha}
\renewcommand{\b}{\beta}
\newcommand{\ga}{\gamma}
\begin{document}

\title[Sharp Strichartz estimates for the wave equation on a rough background]{Sharp Strichartz estimates for the wave equation on a rough background}

%\subjclass{35J10\newline\newline}
\author{Jeremie Szeftel}
\address{DMA, Ecole Normale SupŽrieure, Paris 75005}
\email{Jeremie.Szeftel@ens.fr}
\vspace{-0.3in}

\maketitle

\begin{abstract}
In this paper, we obtain sharp Strichartz estimates for solutions of the wave equation $\square_\gg\phi=0$ where $\gg$ is a rough Lorentzian metric on a 4 dimensional space-time $\MM$. This is the last step of the proof of the bounded $L^2$ curvature conjecture proposed in \cite{Kl:2000}, and solved by S. Klainerman, I. Rodnianski and the author in \cite{boundedl2}, which also relies on the sequence of papers \cite{param1} \cite{param2} \cite{param3} \cite{param4}. Obtaining such estimates is at the core of the low regularity well-posedness theory for quasilinear wave equations. The difficulty is intimately connected to the regularity of the Eikonal equation $\gg^{\a\b}\pr_\a u\pr_\b u=0$ for a rough metric $\gg$. In order to be consistent with the final goal of proving the bounded $L^2$ curvature conjecture, we prove Strichartz estimates for all admissible Strichartz pairs under minimal regularity assumptions on the solutions of the Eikonal equation. 
\end{abstract}

\section{Introduction}

In this paper, we obtain sharp Strichartz estimates for solutions of the wave equation $\square_\gg\phi=0$ where $\gg$ is a rough Lorentzian metric on a 4 dimensional space-time $\MM$. This is the last step of the proof of the bounded $L^2$ curvature conjecture proposed in \cite{Kl:2000}, and solved by S. Klainerman, I. Rodnianski and the author in \cite{boundedl2}, which also relies on the sequence of papers \cite{param1} \cite{param2} \cite{param3} \cite{param4}. Obtaining such estimates is at the core of the low regularity well-posedness theory for quasilinear wave equations. The difficulty is intimately connected to the regularity of the Eikonal equation $\gg^{\a\b}\pr_\a u\pr_\b u=0$ for a rough metric $\gg$. In order to be consistent with the final goal of proving the bounded $L^2$ curvature conjecture, we prove Strichartz estimates for all admissible Strichartz pairs under minimal regularity assumptions on the solutions of the Eikonal equation. 

Since we are ultimately interested in local well-posedness, it is enough to prove local in time Strichartz estimates.  Also, it is natural to prove Strichartz estimates which are localized in frequency\footnote{The standard proof of Strichartz estimates in the flat case proceeds in two steps (see for example \cite{sog}). First, one localizes in frequency using Littlewood-Paley theory. Then, one proves the corresponding Strichartz estimates localized in frequency.}. Thus, we focus in this paper on the issue of proving local in time Strichartz estimates which are localized in frequency. In particular, this turns out to be sufficient for the proof of the bounded $L^2$ curvature conjecture.\\

We start by recalling the sharp Strichartz estimates for the standard wave equation on $(\R^{1+3}, \bf{m})$ where $\bf{m}$ is the Minkowski metric. We consider $\phi$ solution of
\be\lab{waveflat}
\left\{\ba{l}
\square\phi=0,\,(t,x)\in\R^+\times\R^3\\
\phi(0,.)=\phi_0,\, \pr_t\phi(0,.)=\phi_1,
\ea\right.
\ee
where 
$$\square=\square_{\bf{m}}=-\pr_t^2+\Delta_x.$$
Let $(p,q)$ such that $p, q\geq 2$, $q<+\infty$, and 
$$\frac{1}{p}+\frac{1}{q}\leq\frac{1}{2}.$$
Let $r$ defined by
$$r=\frac{3}{2}-\frac{1}{p}-\frac{3}{q}.$$
We call $(p,q,r)$ an admissible pair. Then, the solution $\phi$ of \eqref{waveflat} satisfies the following estimates, called Strichartz estimates \cite{Stri1} \cite{Stri2}
\be\lab{strichflat}
\norm{\phi}_{L^p(\R^+, L^q(\R^3))}\les \norm{\phi_0}_{H^r(\R^3)}+\norm{\phi_1}_{H^{r-1}(\R^3)}.
\ee

Strichartz estimates allow to obtain well-posedness results for nonlinear wave equations with less regularity for the Cauchy data $(\phi_0, \phi_1)$ than what is typically possible by relying only on energy methods (see for example  \cite{PoSi} in the context of semilinear wave equations). Therefore, as far as low regularity well-posedness theory for quasilinear wave equations is concerned, a considerable effort was put in trying to derive Strichartz estimates for the wave equation
\be\lab{wavecurve}
\square_\gg\phi=0
\ee
on a space-time $(\MM,\gg)$ where $\gg$ has limited regularity, see \cite{Sm}, \cite{Ba-Ch1}, \cite{Ba-Ch2}, \cite{Ta1}, \cite{Ta2}, \cite{Ta}, \cite{Kl-R1}, \cite{KR:Annals}, \cite{SmTa}. All these methods have in common a crucial and delicate analysis of the regularity of solutions $u$ to the Eikonal equation
$$\gg^{\a\b}\pr_\a u\pr_\b u=0.$$

To illustrate the role played by the Eikonal equation, let us first recall the plane wave representation of the standard wave equation. The solution $\phi$ of \eqref{waveflat} is given by:
\begin{equation}\label{flatparam}
\begin{array}{l}
\ds\int_{\S}\int_0^{+\infty}e^{i(-t+\xo)\la}\frac{1}{2}\left(\mathcal{F}\phi_0(\la\o)+i\frac{\mathcal{F}\phi_1(\la\o)}{\la}\right)\la^2d\la d\o\\
\ds +\int_{\S}\int_0^{+\infty}e^{i(t+\xo)\la}\frac{1}{2}\left(\mathcal{F}\phi_0(\la\o)-i\frac{\mathcal{F}\phi_1(\la\o)}{\la}\right)\la^2d\la d\o,
\end{array}
\end{equation}
where $\mathcal{F}$ denotes the Fourier transform on $\R^3$. The plane wave representation \eqref{flatparam} is the sum of two half waves, and Strichartz estimates are derived for each half-wave separately with an identical proof so we may focus on the first half-wave which we rewrite under the form
\be\lab{flathalfwave}
\int_{\S}\int_0^{+\infty}e^{i(-t+\xo)\la}f(\la\o)\la^2d\la d\o
\ee
where the function $f$ on $\R^3$ is explicitly given in term of the Fourier transform of the initial data. Note that $-t+x\c\o$ is a family of solutions to the Eikonal equation in the Minkowski space-time depending on the extra parameter $\o\in\S$. The natural generalization of \eqref{flathalfwave} to the curved case is the following representation formula - also called parametrix
\be\lab{curvedparam}
\int_{\S}\int_0^{+\infty}e^{i\la u(t,x,\o)}f(\la\o)\la^2d\la d\o
\ee
where $u$ is a family of solutions to the Eikonal equation in the curved space-time $(\MM, \gg)$ depending on the extra parameter $\o\in\S$. Thus, our parametrix is a Fourier integral operator with a phase $u$ satisfying the Eikonal equation\footnote{We refer to \cite{param2} \cite{param4} for a precise construction of a parametrix of the form \eqref{curvedparam} which generates any initial data of \eqref{wavecurve} and for its control in the context of the bounded $L^2$ curvature theorem of \cite{boundedl2}}.\\

Assume now that the space-time $\mathcal{M}$ is foliated by space-like hypersurfaces $\Sit$ defined as level hypersurfaces of a time function $t$. The estimate for the parametrix \eqref{curvedparam} corresponding to the Strichartz estimates of the flat case \eqref{strichflat} is
\be\lab{strichparam}
\normm{\int_{\S}\int_0^{+\infty}e^{i\la u(t,x,\o)}f(\la\o)\la^2d\la d\o}_{L^p(\R^+,L^q(\Sigma_t))}\les\norm{\la^rf}_{L^2(\R^3)}.
\ee
Since we are ultimately interested in local well-posedness, it is enough to restrict the time interval to $[0,1]$, which corresponds to local in time Strichartz estimates.  Also, it is natural to prove Strichartz estimates which are localized in frequency (see footnote 1). Thus we focus on proving Strichartz estimates on the time interval $[0,1]$ for a parametrix localized in a dyadic shell. Let $j\geq 0$, and let $\psi$ a smooth function on $\R^3$ supported in 
$$\frac 1 2 \leq |\xi|\leq 2.$$
Let $\varphi_j$ the scalar function on $\MM$ defined by the following oscillatory integral:
\be\lab{paraml}
\varphi_j(t,x)=\int_{\S} \int_0^\infty  e^{i \la u(t,x,\o)}\psi(2^{-j}\la)f(\la\o)\la^2d\la d\o.
\ee
We will prove the following version of \eqref{strichparam}, both localized in time and frequency
\be\lab{strichgeneralintro}
\norm{\varphi_j}_{L^p_{[0,1]}L^q(\Sigma_t)}\les 2^{jr}\norm{\psi(2^{-j}\la)f}_{L^2(\R^3)}.
\ee

The Strichartz estimates \eqref{strichgeneralintro} are a consequence of the oscillations of the phase $u$ of the Fourier integral operator $\varphi_j$. Thus, one should expect to have to perform integrations by parts to obtain \eqref{strichgeneralintro}. In turn, this requires $u$ to have enough regularity to be able to perform these integrations by parts. But of course, the rougher the space-time $(\MM,\gg)$ is, the less regularity one can extract from the solution $u$ to the Eikonal equation. Our goal is to prove \eqref{strichgeneralintro} in the context of the bounded $L^2$ curvature theorem obtained in \cite{boundedl2}. This forces us to make assumptions on $u$ which are compatible with the one derived in the companion papers \cite{param1} \cite{param3}. In particular, we may assume the following regularity for $u$
\be\lab{reguintro}
\pr_{t,x} u\in L^\infty,\, \pr_{t,x}\po u\in L^\infty.
\ee
Now, the standard procedure for proving \eqref{strichgeneralintro} - which we shall follow here - is to use the $TT^*$ argument to reduce \eqref{strichgeneralintro} to an $L^1$-$L^\infty$ estimate by interpolation, and finally to a $L^\infty$ estimate for an oscillatory integral with a phase involving $u$. One then typically uses the stationary phase to conclude the proof. This would require at the least\footnote{The regularity \eqref{staphase} is necessary to make sense of the change of variables involved in the stationary phase method (see Remark \ref{rem:compstatphase}).}
\be\lab{staphase}
\pr_{t,x}u\in L^\infty,\,\pr_{t,x}\po^2u\in L^\infty.
\ee
\eqref{staphase} involves one more derivative than our assumptions \eqref{reguintro} and we thus are forced to follow an alternative approach\footnote{This approach is inspired by the overlap estimates for wave packets derived in \cite{Sm} and \cite{SmTa} in the context of Strichartz estimates respectively for $C^{1,1}$ and $H^{2+\ep}$ metrics (see Remark \ref{reminiscentoverlap})} to the stationary phase in order to prove \eqref{strichgeneralintro} under the regularity assumption \eqref{reguintro} for $u$.

\vspace{0.5cm}

\noindent{\bf Acknowledgments.} The author wishes to express his deepest gratitude to Sergiu Klainerman and Igor Rodnianski for stimulating discussions and encouragements. He also would like to stress that the way this paper fits  into the whole proof of the bounded $L^2$ curvature conjecture has been done in collaboration with them. The author is supported by ANR jeunes chercheurs SWAP.

\section{Assumptions on the phase $u(t,x,\o)$ and main results}

\subsection{Time foliation on $\mathcal{M}$}

We foliate the space-time $\mathcal{M}$ by space-like hypersurfaces $\Sit$ defined as level hypersurfaces of a time function $t$. We consider local in time Strichartz estimates. Thus we may assume $0\leq t\leq 1$ so that
\be\lab{decopmpoM}
\MM=\bigcup_{0\leq t\leq 1}\Sigma_t.
\ee
We denote by 
$T$ the unit, future oriented, normal to $\Sigma_t$. We also define the lapse $n$ as 
\be\lab{lapsen}
n^{-1}=T(t).
\ee
Note that we have the following identity between the volume element of $\MM$ and the volume element corresponding to the induced metric on $\Sigma_t$
\be\lab{compvolume}
d\MM=n\,d\Sigma_t\,dt.
\ee
We will assume the following assumption on $n$
\be\lab{assonn}
\frac{1}{2}\leq n\leq 2
\ee
which together with \eqref{compvolume} yields
\be\lab{compvolume1}
d\MM\simeq d\Sigma_t\,dt.
\ee

\begin{remark}
The assumption \eqref{assonn} is very mild. Indeed, even for the very rough space-time $(\mathcal{M},\gg)$ constructed in \cite{boundedl2}, \eqref{assonn} is satisfied, and one has the additional regularity $\nabla n\in L^\infty$, where $\nabla$ denotes the induced covariant derivative on $\Sigma_t$. 
\end{remark}

\begin{remark}
In the flat case, we have $\mathcal{M}=(\R^{1+3},{\bf m})$,  where ${\bf m}$ is the Minkowski metric, and $\Sigma_t=\{t\}\times\R^3$. Also, $n=1$ so that $n$ satisfies \eqref{assonn} in this case.   
\end{remark}

\subsection{Geometry of the foliation generated by $u$ on $\mathcal{M}$}

Remember that $u$ is a solution to the eikonal equation $\gg^{\alpha\beta}\partial_\alpha u\partial_\beta u=0$ on $\mathcal{M}$ depending on a extra parameter $\o\in \S$.  The level hypersufaces $u(t,x,\o)=u$  of the optical function $u$ are denoted by  $\H_u$. Let $L'$ denote the space-time gradient of $u$, i.e.:
\be\lab{def:L'0}
L'=\gg^{\a\b}\pr_\b u \pr_\a.
\ee
Using the fact that $u$ satisfies the eikonal equation, we obtain:
\be\lab{def:L'1}
\dd_{L'}L'=0,
\ee
which implies that $L'$ is the geodesic null generator of $\H_u$.

We have: 
$$T(u)=\pm |\nab u|$$
where $|\nab u|^2=\sum_{i=1}^3|e_i(u)|^2$ relative to an orthonormal frame $e_i$ on $\Sigma_t$. Since the sign of $T(u)$ is irrelevant, we choose by convention:
\be\lab{it1'}
T(u)=-|\nab u|
\end{equation}
so that $u$ corresponds to $-t+x\c\o$ in the flat case.

Let
 \be\lab{it2}
L=bL'=T+N,
\end{equation}
where $L'$ is the space-time gradient of $u$ \eqref{def:L'0}, $b$  is  the  \textit{lapse of the null foliation} (or shortly null lapse)
\be\lab{it3}
b^{-1}=-<L', T>=-T(u),
\end{equation} 
and $N$ is a unit vectorfield given by
\be\lab{it3bis}
N=\frac{\nabla u}{|\nabla u|}.
\end{equation}

Note that we have the following identities
\begin{lemma}
\be\lab{identities}
L(u)=0,\, L(\po u)=0
\ee
and
\be\lab{identities1}
\gg(N, \po N)=0.
\ee
\end{lemma}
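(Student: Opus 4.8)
The plan is to obtain all three identities by differentiating algebraic relations — the eikonal equation and the unit-length normalization of $N$ — either along $L'$ or in the parameter $\o$, exploiting that the metric $\gg$ does not depend on $\o$ and that $\po$ commutes with $\pr_{t,x}$.

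First, for $L(u)=0$: since $L=bL'$ with $L'=\gg^{\a\b}\pr_\b u\,\pr_\a$ by \eqref{it2}--\eqref{def:L'0}, one has $L'(u)=\gg^{\a\b}\pr_\a u\,\pr_\b u=0$ directly from the eikonal equation, hence $L(u)=bL'(u)=0$.

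Second, for $L(\po u)=0$: I would apply the vectorfield $\po$, acting only on the $\o$–variable, to the eikonal equation $\gg^{\a\b}(t,x)\pr_\a u(t,x,\o)\,\pr_\b u(t,x,\o)=0$, which holds identically in $(t,x,\o)$. Since $\gg$ is independent of $\o$ and $\po$ commutes with $\pr_{t,x}$, this produces $2\gg^{\a\b}\pr_\a(\po u)\,\pr_\b u=0$, i.e. $L'(\po u)=0$, and multiplying by $b$ gives $L(\po u)=0$. Alternatively, one can simply apply $\po$ to the already established identity $L'(u)=0$ after commuting $\po$ past $L'$, which costs only the term $\gg^{\a\b}\pr_\a(\po u)\pr_\b u$ again.

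Third, for $\gg(N,\po N)=0$: recall from \eqref{it3bis} that $N=\nabla u/|\nabla u|$ is tangent to $\Sigma_t$ and has unit length for the induced metric, so $\gg(N,N)=1$ identically in $\o$. For fixed $(t,x)$, the map $\o\mapsto N(t,x,\o)$ is a curve in the fixed finite–dimensional vector space $T_x\Sigma_t$ equipped with the fixed inner product $\gg|_{(t,x)}$, so $\po$ differentiates it in the ordinary sense and the Leibniz rule yields $2\gg(N,\po N)=\po(\gg(N,N))=\po(1)=0$.

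The computations are routine; the only point requiring a little care is the precise meaning of $\po N$ — and more generally of differentiating $\o$–dependent tensors on $\MM$ — which I would pin down by freezing the base point so that $\po$ acts on a curve in a fixed inner product space, after which the Leibniz rule applies verbatim. I do not expect any genuine obstacle here.
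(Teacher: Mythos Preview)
Your proposal is correct and follows essentially the same approach as the paper: differentiate the eikonal equation in $\o$ to obtain $L'(\po u)=0$, and differentiate $\gg(N,N)=1$ in $\o$ to obtain $\gg(N,\po N)=0$. The extra care you take in justifying the meaning of $\po N$ and the commutation of $\po$ with $\pr_{t,x}$ is fine but not strictly needed here.
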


\begin{proof}
Using the definition \eqref{def:L'0} of $L'$ and the fact that $u$ satisfies the Eikonal equation, we have
$$L'(u)=\gg^{\a\b}\partial_\a u\partial_\b u=0.$$ 
In view of the definition \eqref{it2} of $L$, we deduce
\be\lab{gasp}
L(u)=0.
\ee
Also, differentiating the Eikonal equation with respect to $\o$ yields
$$\gg^{\a\b}\partial_\a u\partial_\b\po u=0$$
which yields
$$L'(\po u)=0$$
and thus
$$L(\po u)=0.$$
Together with \eqref{gasp}, this implies \eqref{identities}. 

Also, we have in view of the definition \eqref{it3bis} of $N$
$$\gg(N, N)=1.$$
Differentiating in $\o$, we obtain
$$\gg(N, \po N)=0,$$
which is \eqref{identities1}. This concludes the proof of the lemma.
\end{proof}

\subsection{Regularity assumptions for $u(t,x,\o)$}

We now state our assumptions for the phase $u(t,x,\o)$. These assumptions are compatible with the regularity obtained for the function $u(t,x,\o)$ constructed in \cite{param3}. Let $0<\ep<1$ a small enough universal constant\footnote{The fact that we may take $\ep$ small enough is consistent with the construction in \cite{param3} and results from a standard reduction to small data for proving well-posedness results for nonlinear wave equations (see \cite{boundedl2} for details on this procedure in the context of the bounded $L^2$ curvature theorem)}. $b$ and $N$ satisfy
\be\lab{regb}
\norm{b-1}_{L^\infty}+\norm{\po b}_{L^\infty}\les\ep.
\ee

\be\lab{regpoN}
\norm{\gg(\po N,\po N)-I_2}_{L^\infty}\les \ep.
\ee

\be\lab{ad1}
|N(., \o)-N(., \o')|=|\o-\o'|(1+O(\ep)).
\ee

\begin{remark}
In the flat case, we have $\mathcal{M}=(\R^{1+3},{\bf m})$,  where ${\bf m}$ is the Minkowski metric, $u(t,x,\o)=-t+\xo$, $b= 1$, $N=\o$ and $L=\partial_t+\o\cdot\partial_x$. Thus, the assumptions \eqref{regb} \eqref{regpoN} \eqref{ad1}  are clearly satisfied with $\ep=0$.
\end{remark}

\begin{remark}
In terms of the regularity of $u(t,x,\o)$, the assumptions \eqref{regb} \eqref{regpoN} correspond to 
$$\nabla u\in L^\infty\textrm{ and }\nabla\po u\in L^\infty$$
which is very weak. In particular, the classical proof for obtaining Strichartz estimates for the wave equation relies on the stationary phase for an oscillatory integral involving $u$ as a phase, and typically requires at the least one more derivative for $u$ (see Remark \ref{rem:compstatphase}).
\end{remark}

\subsection{A global coordinate system on $\Sigma_t$}\lab{sec:assumptioncoord}

For all $0\leq t\leq 1$, and for all $\o\in\S$, $(u(t,x,\o), \po u(t,x,\o))$ is a global coordinate system on $\Sigma_t$. Furthermore, the volume element is under control in the sense that in this coordinate system, we have
\be\lab{assglobalcoordvol}
\frac{1}{2}\leq \sqrt{\det g}\leq 2
\ee
where $g$ is the induced metric on $\Sigma_t$, and where $\det g$ denotes the determinant of the matrix of the coefficients of $g$. 

%% Sur P_{t,u}, se deduit de cas t=0 sur P_{0,u} grace a \eqref{identities}. Puis sur $\Sigma_t$ car $\Sigma_t=\cup P_{t,u}$ avec union disjointe 

\begin{remark}
In the flat case, we have $\Sigma_t=\{t\}\times\R^3$ and $u(t,x,\o)=-t+\xo$ so that $(u(t,x,\o), \po u(t,x,\o))$ is clearly a global coordinate system on $\Sigma_t$ and $\det g=1$ in this case. These assumptions are also satisfied by the function $u(t,x, \o)$ constructed in \cite{param3}. 
\end{remark}

\subsection{Main results}

We next state our main result concerning general Strichartz inequalities in mixed space-time norms
of the form $L^p_{[0,1]}L^q(\Sigma_t)$  defined as follows,
$$\|F\|_{L^p_{[0,1]}L^q(\Sigma_t)}=\left(\int_0^1 \|F(t,\cdot)\|_{L^p(\Si_t)}^pdt\right)^{\frac{1}{p}}.$$
 
\begin{theorem}\lab{mainth}
Let $(p,q)$ such that $p, q\geq 2$, $q<+\infty$, and 
$$\frac{1}{p}+\frac{1}{q}\leq\frac{1}{2}.$$
Let $r$ defined by
$$r=\frac{3}{2}-\frac{1}{p}-\frac{3}{q}.$$
Then, the parametrix localized at frequency $j$ defined in \eqref{paraml} satisfies 
the following Strichartz inequality
\be\lab{strichgeneral}
\norm{\varphi_j}_{L^p_{[0,1]}L^q(\Sigma_t)}\les 2^{jr}\norm{\psi(2^{-j}\la)f}_{L^2(\R^3)}.
\ee
\end{theorem}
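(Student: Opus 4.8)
The plan is to follow the standard $TT^*$ reduction, reducing the fixed-frequency Strichartz bound \eqref{strichgeneral} first to the endpoint dispersive-type estimate and then to a pointwise kernel bound, but to replace the stationary phase argument—unavailable under the weak regularity \eqref{reguintro}—with a direct decomposition of the angular integral into caps, followed by integration by parts in $\la$ and $\o$ on each cap. First I would rescale: writing $\mu=2^{-j}\la$ and factoring out the $\la^2$ and $\psi$, the oscillatory integral \eqref{paraml} becomes a superposition of phases $2^j u(t,x,\o)$, so it suffices to prove the $j=0$ analogue with an extra bookkeeping of powers of $2^j$, and to sum the dyadic pieces (this last sum is free since we only claim a frequency-localized estimate). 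Then, by Littlewood--Paley and the $TT^*$ argument, \eqref{strichgeneral} for an admissible pair $(p,q)$ follows, via the abstract Keel--Tao machinery, from the single dispersive estimate
\be\lab{dispersive}
\norm{\int_{\S}\int_0^\infty e^{i\la(u(t,x,\o)-u(t',x',\o))}\psi(2^{-j}\la)^2 g(\o)\,\la^2 d\la\, d\o}_{L^\infty_x(\Sigma_t)}\les \frac{2^{3j}}{(1+2^j|t-t'|)}\norm{g}_{L^\infty(\S)},
\ee
uniformly in $t,t',x'$; indeed the admissible range, including the endpoint $\frac1p+\frac1q=\frac12$ with the $L^2_tL^\infty$ borderline handled separately, is exactly what a $\frac{1}{1+2^j|t-t'|}$ decay rate yields after interpolating against the trivial $L^2\to L^2$ energy bound.

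The core of the argument is therefore the proof of \eqref{dispersive}, and here is where the substitute for stationary phase enters. Fix $t,t',x',x$ and set $\Phi(\o)=u(t,x,\o)-u(t',x',\o)$. In the flat case $\Phi(\o)=-(t-t')+(x-x')\c\o$, whose $\o$-gradient vanishes only at the poles $\pm\frac{x-x'}{|x-x'|}$, and the classical dispersive decay comes from a second-order expansion there. Without control of $\po^2 u$ we cannot expand to second order, so instead I would cover $\S$ by $\sim 2^{2k}$ geodesic caps $\o_\nu$ of radius $2^{-k}$ for each dyadic $2^{-k}\gtrsim 2^{-j}$, decompose $g=\sum_\nu g_\nu$ accordingly, and on each cap use the lower bounds \eqref{regpoN} and \eqref{ad1}—which say $\po u$ has invertible, essentially isometric $\o$-differential—to integrate by parts in $\o$ as long as we are at angular distance $\gtrsim 2^{-k}$ from the set where $\pr_\o\Phi=0$. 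Each such integration by parts in $\o$ costs a factor $2^{-k}(2^j\cdot(\text{distance from the critical angle}))^{-1}$ and is legitimate because it only falls on $\po u$ and $\po^2 u$ \emph{paired against test directions}, i.e.\ only on quantities controlled by \eqref{regb}--\eqref{ad1} and not on a bare second angular derivative; meanwhile on the single $O(2^{-k})$-cap containing the near-critical direction we also integrate by parts in $\la$, exploiting \eqref{identities}, \eqref{it1'} and \eqref{regb} which give $|\pr_t(u(t,x,\o)-u(t',x',\o))|\gtrsim 1$ hence $|\Phi|\gtrsim |t-t'|$ minus the angular contribution—this produces the $(1+2^j|t-t'|)^{-1}$ factor. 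Optimizing the choice of $k$ (balancing the $2^{2k}$ loss from counting caps against the gain from $\o$-integration by parts, exactly as in the wave-packet overlap estimates of \cite{Sm} and \cite{SmTa} alluded to in the introduction) yields \eqref{dispersive}.

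Two supporting ingredients need to be set up before the cap decomposition. First, I would record the geometric consequences of the assumptions: from \eqref{it1'}, \eqref{it2}, \eqref{it3}, \eqref{regb} the time derivative $T(u)=-b^{-1}$ satisfies $|T(u)|\simeq 1$, and $L(u)=L(\po u)=0$ from \eqref{identities} lets us trade $\pr_t$ derivatives for spatial ones $N(u), N(\po u)$ with bounded coefficients; second, using the global coordinate system $(u,\po u)$ on $\Sigma_t$ from Section~\ref{sec:assumptioncoord} together with \eqref{assglobalcoordvol}, I would re-express all $L^q(\Sigma_t)$ norms as $L^q$ norms in the flat coordinates $(u,\po u)$ up to constants, so that the $L^1$--$L^\infty$ duality used in the $TT^*$ step is literally the Euclidean one. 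The main obstacle, as the paper's own footnotes flag, is precisely that the natural change of variables $\o\mapsto \pr_\o\Phi(\o)$ underlying stationary phase requires $\po^2 u\in L^\infty$, which we do not have: the whole difficulty is organizing the $\o$- and $\la$-integrations by parts so that \emph{every} derivative that lands on the phase is one of the controlled quantities $\pr_{t,x}u$, $\pr_{t,x}\po u$ of \eqref{reguintro}, never an uncontrolled $\po^2 u$, while still extracting the full sharp decay rate $2^{3j}(1+2^j|t-t'|)^{-1}$; getting the cap radius and the number of integrations by parts to match up so that the $2^{2k}$ cap-counting loss is exactly absorbed is the delicate quantitative heart of the proof.
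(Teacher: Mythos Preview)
Your reduction to a dispersive/kernel estimate via $TT^*$ is correct and matches the paper. The gap is in the core step: your proposed angular integration by parts on caps cannot be carried out under the stated assumptions, and this is precisely the obstruction the paper is built to circumvent.

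Concretely, any integration by parts in $\o$ on a cap writes $e^{i\la\Phi}=(i\la\,\pr_\th\Phi)^{-1}\pr_\th e^{i\la\Phi}$ and then throws $\pr_\th$ back onto the amplitude \emph{and onto} $(\pr_\th\Phi)^{-1}$. The latter produces $\pr_\th^2\Phi=\po^2 u(t,x,\o)-\po^2 u(t',x',\o)$. The assumptions \eqref{regb}--\eqref{ad1} control $b,\po b,\gg(\po N,\po N)$, i.e.\ quantities built from $\nabla u$ and $\nabla\po u$; none of them bounds $\po^2 u$ pointwise, and ``pairing against test directions'' does not help---the scalar $\pr_\th^2\Phi$ is exactly a bare second angular derivative of $u$. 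This is not a technicality: Remark~\ref{rem:compstatphase} records that $\pr_{t,x}\po^2 u\in L^\infty$ is one more derivative than is available, and the cap/wave-packet method you invoke from \cite{Sm}, \cite{SmTa} uses that extra regularity (those papers assume $C^{1,1}$ or $H^{2+\ep}$ coefficients). Your claim about $|\Phi|\gtrsim|t-t'|$ on the critical cap is also not right as stated: at the angle where $\po\Phi=0$ the value $\Phi(\o_0)=m_0$ can have either sign and any size, and the case analysis $m_0<0$, $m_0=0$, $m_0>0$ is a genuine part of the argument.

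What the paper does instead is integrate by parts \emph{only in $\la$} (twice), which needs nothing beyond $\phi$ itself, yielding
\[
|K|\les\int_{\S}\frac{d\o}{1+2^{2j}\phi^2},
\]
and then proves pointwise lower bounds of the type $|\phi|\gtrsim|t-s||\o-\o_0|^2$ (Lemma~\ref{lemma:key}). The key observation is that establishing such a quadratic \emph{inequality} requires only the first-order information \eqref{regb}--\eqref{ad1} and the transport identities \eqref{identities}, whereas the corresponding \emph{equality} \eqref{usualstatmeth} underlying stationary phase (or, equivalently, the bound on $\pr_\th^2\Phi$ needed for your $\o$-IBP) would require the unavailable second angular derivative. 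The geometric case split $(s,y)\in S$, $A_{int}$, $A_{ext}$ and the curves $\mu$, $\eta$ connecting $(s,y)$ to points on the null geodesic cone from $(t,x)$ are what make this lower bound go through.
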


We also obtain the following corollary which is needed in the proof of the bounded $L^2$ curvature conjecture \cite{boundedl2}.
\begin{corollary}\lab{cor:strichl4}
The parametrix localized at frequency $j$ defined in \eqref{paraml} satisfies 
the following $L^4(\MM)$ Strichartz inequalities
\be\lab{l4strichbis}
\norm{\varphi_j}_{L^4(\MM)}\les 2^{\frac{j}{2}}\norm{\psi(2^{-j}\la)f}_{L^2(\R^3)},
\ee
and 
\be\lab{l4strichbis:1}
\norm{\nabla\varphi_j}_{L^4(\MM)}\les 2^{\frac{3j}{2}}\norm{\psi(2^{-j}\la)f}_{L^2(\R^3)}.
\ee
Furthermore, assume that $u$ satisfies the following additional assumption
\be\lab{aditionalassumption}
\sup_{\omega\in\S, u_0\in\R}\norm{\nabla^2u}_{L^4(\,^{(\omega)}\mathcal{H}_{u_0})}\les 1,
\ee
where for $\o\in\S$ and $u_0\in\R$, $\,^{(\omega)}\mathcal{H}_{u_0}$ denotes the level hypersurface of $u(.,\omega)$
$$\,^{(\omega)}\mathcal{H}_{u_0}=\{(t,x)\,/\,u(t,x,\omega)=u_0\}.$$
Then \eqref{paraml} satisfies the following $L^4(\MM)$ Strichartz inequality
\be\lab{l4strichbis:2}
\norm{\nabla^2\varphi_j}_{L^4(\MM)}\les 2^{\frac{5j}{2}}\norm{\psi(2^{-j}\la)f}_{L^2(\R^3)}.
\ee
\end{corollary}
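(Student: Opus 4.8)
The plan is to deduce Corollary \ref{cor:strichl4} from Theorem \ref{mainth} by choosing appropriate admissible pairs and using the foliation structures on $\MM$. For \eqref{l4strichbis}, I would apply \eqref{strichgeneral} with the pair $(p,q)=(4,4)$, which satisfies $\frac1p+\frac1q=\frac12$ and gives $r=\frac32-\frac14-\frac34=\frac12$. This yields $\norm{\varphi_j}_{L^4_{[0,1]}L^4(\Sigma_t)}\les 2^{j/2}\norm{\psi(2^{-j}\la)f}_{L^2(\R^3)}$, and since by \eqref{compvolume1} we have $d\MM\simeq d\Sigma_t\,dt$, the mixed norm $L^4_{[0,1]}L^4(\Sigma_t)$ is equivalent to $L^4(\MM)$; this gives \eqref{l4strichbis}.

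For \eqref{l4strichbis:1}, the point is that $\nabla\varphi_j$ is again a parametrix of the form \eqref{paraml} but with an extra factor of $\la$ in the symbol, up to lower order contributions. More precisely, differentiating \eqref{paraml} in a spatial direction tangent to $\Sigma_t$ brings down a factor $i\la\nabla u$, which is bounded in $L^\infty$ by \eqref{regb} (since $\nabla u\in L^\infty$ there), so $\nabla\varphi_j$ is (a bounded combination of) expressions of the form \eqref{paraml} with $f$ replaced by $\la f$ times an $L^\infty$ amplitude depending on $(t,x)$. One checks that Theorem \ref{mainth} is insensitive to multiplying the symbol by such a bounded amplitude — this is where one may need to either absorb the amplitude into the $TT^*$ argument or simply note that the proof of Theorem \ref{mainth} accommodates $(t,x)$-dependent symbols bounded in $L^\infty$ uniformly in $\la,\o$. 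Granting this, $\nabla\varphi_j$ satisfies the bound of $\varphi_j$ with an extra $2^j$ from the frequency localization $|\la|\simeq 2^j$, giving $2^{j/2}\cdot 2^j=2^{3j/2}$, which is \eqref{l4strichbis:1}. The same scheme applied twice, with the symbol now carrying a factor $(\la\nabla u)^2$ plus a term $\la\nabla^2 u$, produces \eqref{l4strichbis:2} with the gain $2^{2j}$, i.e. $2^{j/2}\cdot 2^{2j}=2^{5j/2}$; the term involving $\nabla^2 u$ requires the extra hypothesis \eqref{aditionalassumption}, and here one estimates that contribution by foliating $\MM$ by the hypersurfaces $\,^{(\omega)}\mathcal{H}_{u_0}$, putting $\nabla^2 u$ in $L^4$ along the leaves via \eqref{aditionalassumption} and the parametrix factor in the complementary $L^\infty$ (or $L^2$) direction, then using a coarea/Fubini argument with the coordinate system of Section \ref{sec:assumptioncoord}.

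The main obstacle I anticipate is the second derivative estimate \eqref{l4strichbis:2}: controlling the contribution of the term where both derivatives hit the phase produces an amplitude $(\la\nabla u)^2$ of size $2^{2j}$ but only bounded in $L^\infty$, which is fine, whereas the cross term and especially the term $\la\,\nabla^2 u\cdot(\textrm{amplitude})$ cannot be handled by the plain $L^\infty$ bound on the amplitude — here $\nabla^2 u$ is genuinely rough (it is not in $L^\infty$, only in $L^4$ on the null leaves by \eqref{aditionalassumption}). The resolution is to not treat this as a symbol estimate at all but rather to estimate $\norm{\la\,\nabla^2u\,\Phi_j}_{L^4(\MM)}$ directly, where $\Phi_j$ is the relevant first-order parametrix (already controlled in $L^4(\MM)\cap\ldots$ and in $L^\infty_{u_0}L^?(\,^{(\omega)}\mathcal{H}_{u_0})$ after a further integration-by-parts exploiting \eqref{identities} — the fact that $L(u)=0$ and $L(\po u)=0$ — so that derivatives along $L$ are essentially free). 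One foliates by $\,^{(\omega)}\mathcal{H}_{u_0}$, applies Hölder on each leaf pairing the $L^4$ bound \eqref{aditionalassumption} for $\nabla^2u$ against an $L^\infty(\,^{(\omega)}\mathcal{H}_{u_0})$ bound for $\la\Phi_j$ — the latter coming from the same circle of $L^1$–$L^\infty$/stationary-phase-free estimates that underlies Theorem \ref{mainth}, restricted to a null hypersurface rather than to $\Sigma_t$ — and then integrates in $u_0$ and $\o$ using \eqref{assglobalcoordvol} and \eqref{compvolume1}. Everything else is a routine matter of bookkeeping with the frequency factors $|\la|\simeq 2^j$ and of checking that the extra $L^\infty$ amplitudes introduced by differentiating the phase do not disturb the proof of Theorem \ref{mainth}.
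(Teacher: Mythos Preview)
Your approach to \eqref{l4strichbis} and \eqref{l4strichbis:1} is exactly what the paper does: apply Theorem~\ref{mainth} with $p=q=4$ (so $r=1/2$), identify $L^4_{[0,1]}L^4(\Sigma_t)$ with $L^4(\MM)$ via \eqref{compvolume1}, and for the gradient observe that differentiating brings down $i\la\nabla u$ with $\nabla u=b^{-1}N\in L^\infty$. The paper formalizes your remark that ``the proof of Theorem~\ref{mainth} accommodates $(t,x)$-dependent symbols bounded in $L^\infty$'' as a separate Proposition (the operator $T_jf=\int_{\S}\int e^{i\la u}a(t,x,\o)\psi(2^{-j}\la)f\la^2d\la d\o$ with $\|a\|_{L^\infty}\les 1$ satisfies the same Strichartz bound), so this step is cleanly justified.

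For \eqref{l4strichbis:2} your decomposition into the $(\la\nabla u)^2$ piece and the $\la\nabla^2u$ piece is again the paper's, and the first piece is handled exactly as you say. For the second piece, however, your proposed mechanism is more complicated than necessary and slightly misdirected. You do \emph{not} need any dispersive or ``stationary-phase-free'' estimate restricted to a null hypersurface, nor any integration by parts using \eqref{identities}. The point is much simpler: after applying Minkowski in $\o$ to bring $\int_{\S}d\o$ outside the $L^4(\MM)$ norm, for each fixed $\o$ the inner integral
\[
F_\o(t,x)=\int_0^\infty e^{i\la u(t,x,\o)}(2^{-j}\la)\psi(2^{-j}\la)f(\la\o)\la^2d\la
\]
is a function of $u(t,x,\o)$ \emph{alone}, hence is constant on each leaf $\,^{(\o)}\mathcal{H}_{u_0}$. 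Writing $L^4(\MM)=L^4_uL^4(\mathcal{H}_u)$ (for that fixed $\o$), one pairs $\nabla^2u$ in $L^4(\mathcal{H}_u)$ via \eqref{aditionalassumption} against $F_\o$ in $L^4_u$. The latter is estimated by the elementary one-dimensional bound
\[
\|F_\o\|_{L^4_u}\le\|F_\o\|_{L^\infty_u}^{1/2}\|F_\o\|_{L^2_u}^{1/2}\les\big(2^{j/2}\|\psi(2^{-j}\la)\la^2f\|_{L^2_\la}\big)^{1/2}\|\psi(2^{-j}\la)\la^2f\|_{L^2_\la}^{1/2},
\]
using Cauchy--Schwarz in $\la$ for the $L^\infty_u$ bound and Plancherel for the $L^2_u$ bound. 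Then Cauchy--Schwarz in $\o$ closes the estimate. So the ``complementary direction'' bound you were reaching for is not a Strichartz-type estimate at all but a trivial consequence of $F_\o$ depending only on $u$; make this explicit and drop the references to \eqref{identities} and to the kernel estimates of Theorem~\ref{mainth} in this step.
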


\begin{remark}
The additional regularity assumption \eqref{aditionalassumption} is compatible with the regularity obtained for the function $u(t,x,\o)$ constructed in \cite{param3}. Note that it also holds in the flat case since we have $u=-t+x\cdot\omega$ and hence $\nabla^2u=0$.
\end{remark}

\vspace{0.3cm}

The rest of the paper is organized as follows. In section \ref{sec:proofmainthbis}, we use the standard $TT^*$ argument to reduce the proof of Theorem \ref{mainth} and Corollary \ref{cor:strichl4} to an upper bound on the kernel   $K$ of a certain operator. This kernel is an oscillatory integral with a phase $\phi$. In section \ref{sec:estkernel}, we prove the upper bound on the kernel $K$ provided we have a suitable lower bound on $\phi$. Finally, in section \ref{sec:lowerb}, we prove the lower bound for $\phi$ used in section \ref{sec:estkernel}.

\section{Proof of Theorem \ref{mainth} and Corollary \ref{cor:strichl4}}\lab{sec:proofmainthbis} 
 
\subsection{Proof of Theorem \ref{mainth}} 

Let $a(t,x,\omega)$ a scalar function on $\MM\times\S$. Let $T_j$ be the operator, applied to functions $f\in L^2(\R^3)$,
\bea
\label{eq:op-Tj}
T_jf(t,x)= \int_{\S} \int_0^\infty  e^{i \la u(t,x,\o)}a(t,x,\omega)\psi(2^{-j}\la)f(\la\o)\la^2d\la d\o.
\eea

$T_j$ satisfies the following estimate.
\begin{proposition}\lab{prop:TT*argument}
Let $(p,q)$ such that $p, q\geq 2$, $q<+\infty$, and 
$$\frac{1}{p}+\frac{1}{q}\leq\frac{1}{2}.$$
Let $r$ defined by
$$r=\frac{3}{2}-\frac{1}{p}-\frac{3}{q}.$$
Assume that the scalar function $a$ satisfies
\be\lab{assbounda}
\norm{a}_{L^\infty}\les 1.
\ee
Then, the operator $T_j$ defined in \eqref{eq:op-Tj} satisfies 
the following Strichartz inequality
\be\lab{strichgeneral:bis}
\norm{T_jf}_{L^p_{[0,1]}L^q(\Sigma_t)}\les 2^{jr}\norm{\psi(2^{-j}\la)f}_{L^2(\R^3)}.
\ee
\end{proposition}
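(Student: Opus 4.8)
The plan is to run the standard $TT^*$ argument for the frequency-localized operator $T_j$, reducing \eqref{strichgeneral:bis} to a pointwise bound on the kernel of $T_jT_j^*$, and then to deduce that kernel bound from a lower bound on the phase of the associated oscillatory integral.

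First I would dualize. After a routine reduction in which $f$ is taken supported in the dyadic shell $2^{j-1}\le|\xi|\le 2^{j+1}$, so that $\norm{\psi(2^{-j}\la)f}_{L^2(\R^3)}\simeq\norm{f}_{L^2(\R^3)}$, the bound \eqref{strichgeneral:bis} is equivalent to $\norm{T_j^*F}_{L^2(\R^3)}\les 2^{jr}\norm{F}_{L^{p'}_{[0,1]}L^{q'}(\Sigma_t)}$; squaring this and writing $\norm{T_j^*F}_{L^2}^2=\langle T_jT_j^*F,F\rangle$ reduces matters to
\[
\norm{T_jT_j^*F}_{L^p_{[0,1]}L^q(\Sigma_t)}\les 2^{2jr}\norm{F}_{L^{p'}_{[0,1]}L^{q'}(\Sigma_t)}.
\]
A direct computation from \eqref{eq:op-Tj} gives $T_jT_j^*F(t,x)=\int_0^1\int_{\Sigma_s}K(t,x,s,y)\,F(s,y)\,d\Sigma_s\,ds$, where, writing $\phi(t,x,s,y,\o)=u(t,x,\o)-u(s,y,\o)$ and using \eqref{compvolume1} to identify $d\MM$ with $d\Sigma_s\,ds$,
\[
K(t,x,s,y)=\int_{\S}\int_0^\infty e^{i\la\phi(t,x,s,y,\o)}\,a(t,x,\o)\,\overline{a(s,y,\o)}\,\psi(2^{-j}\la)^2\la^2\,d\la\,d\o
\]
is an oscillatory integral with phase $\phi$ and large parameter $\la\simeq 2^j$.

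The displayed bound on $T_jT_j^*$ will follow from two fixed-time estimates on $K(t,\cdot,s,\cdot)$, together with interpolation and a Hardy--Littlewood--Sobolev inequality in the time variable (needed in its weak-type form on the borderline pairs $\frac1p+\frac1q=\frac12$): (i) the energy bound $\norm{K(t,\cdot,s,\cdot)}_{L^2(\Sigma_s)\to L^2(\Sigma_t)}\les 1$, which, using the assumption of Section \ref{sec:assumptioncoord} that $(u(t,\cdot,\o),\po u(t,\cdot,\o))$ is a global coordinate system on $\Sigma_t$ with volume element comparable to $1$, together with $\norm{a}_{L^\infty}\les 1$, reduces essentially to Plancherel; and (ii) the dispersive bound $|K(t,x,s,y)|\les 2^{3j}(1+2^j|t-s|)^{-1}$. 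Interpolating (i) and (ii) yields $\norm{K(t,\cdot,s,\cdot)}_{L^{q'}(\Sigma_s)\to L^q(\Sigma_t)}\les 2^{3j(1-2/q)}(1+2^j|t-s|)^{-(1-2/q)}$, and since $(p,q)$ is admissible the exponents are exactly such that the convolution in $t$ produces the factor $2^{2jr}$.

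The main obstacle is the dispersive estimate (ii), and this is precisely where the limited regularity of $u$ rules out the classical route. In the smooth case one gets the decay $(1+2^j|t-s|)^{-1}=(1+2^j|t-s|)^{-(3-1)/2}$ by stationary phase in $\o\in\S$, but this needs control of $\po^2u$ as in \eqref{staphase}, one derivative more than the assumptions \eqref{regb}, \eqref{regpoN}, \eqref{ad1} provide. Instead I would decompose $\S$ dyadically into caps according to the size of $|\po\phi|$: on the caps where $\po\phi$ is not too small one integrates by parts in $\o$ and in $\la$, using only $\nabla\po u\in L^\infty$, while on the remaining region, where $\po\phi$ is small, one estimates its measure directly. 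The quantitative input that makes both steps work --- in place of the non-degeneracy of the Hessian exploited by stationary phase --- is a \emph{lower bound on the phase $\phi$}, to the effect that $\phi$ cannot stay small on a large portion of $\S$; such a bound is extracted from the geometry of the null foliation generated by $u$, namely from the identities \eqref{identities}, \eqref{identities1} and the assumptions \eqref{regb}, \eqref{regpoN}, \eqref{ad1}. Establishing this lower bound, and then deducing the bound on $K$ from it, is the genuinely delicate part, which I would carry out separately in Sections \ref{sec:estkernel} and \ref{sec:lowerb}. Finally, the generality of the amplitude $a$ (only $\norm{a}_{L^\infty}\les 1$ is used) is what makes Proposition \ref{prop:TT*argument} applicable to the spatial derivatives of $\varphi_j$ in Corollary \ref{cor:strichl4}: each derivative either hits $e^{i\la u}$, producing a harmless factor $\la\simeq 2^j$, or is absorbed, using $\nabla u,\nabla\po u\in L^\infty$ (resp.\ the extra assumption \eqref{aditionalassumption} for the second derivative), into a new amplitude still bounded by $1$.
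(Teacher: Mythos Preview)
Your $TT^*$ reduction is correct and matches the paper's Section~\ref{sec:tts}: one passes from \eqref{strichgeneral:bis} to the bound $\norm{T_jT_j^*h}_{L^pL^q}\les 2^{2jr}\norm{h}_{L^{p'}L^{q'}}$, and then to a pointwise dispersive bound on the kernel combined with the fixed-time $L^2$ bound, interpolation, and Hardy--Littlewood--Sobolev in~$t$. (The paper inserts a rescaling $(t,x)\mapsto(t/2^j,x/2^j)$ so that the target dispersive bound reads simply $|K|\les|t-s|^{-1}$ on $[0,2^j]\times\R^3$; your formulation $|K|\les 2^{3j}(1+2^j|t-s|)^{-1}$ on $[0,1]$ is equivalent.)

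Where your proposal diverges from the paper --- and runs into trouble --- is the mechanism you sketch for the dispersive bound. You propose to decompose $\S$ into caps according to the size of $|\po\phi|$ and to integrate by parts in $\o$ on the caps where $\po\phi$ is not small. But a single integration by parts in $\o$ produces the derivative $\po\bigl(a(t,x,\o)\overline{a(s,y,\o)}/\po\phi\bigr)$, hence both $\po a$ and $\po^2\phi=\po^2u(t,x,\o)-\po^2u(s,y,\o)$. Neither is controlled: the hypothesis \eqref{assbounda} gives only $\norm{a}_{L^\infty}$, and the regularity assumptions \eqref{regb}--\eqref{ad1} correspond to $\partial_{t,x}\po u\in L^\infty$, one $\o$-derivative short of $\po^2u$. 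This is exactly the obstruction flagged in Remark~\ref{rem:compstatphase}.

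The paper's route avoids any $\o$-integration by parts. One integrates by parts \emph{only} in $\la$ --- twice, using just $\norm{a}_{L^\infty}\les1$ and the compact $\la$-support of $\psi$ --- to obtain
\[
|K(t,x,s,y)|\les\int_{\S}\frac{d\o}{1+2^{2j}\phi\bigl(\tfrac{t}{2^j},\tfrac{x}{2^j},\tfrac{s}{2^j},\tfrac{y}{2^j},\o\bigr)^2}.
\]
The lower bounds on $|\phi|$ of Lemma~\ref{lemma:key} (schematically $|\phi|\gtrsim|t-s||\o-\o_0|^2$ near the relevant $\o_0$) are then fed \emph{directly} into this $\S$-integral, which is evaluated by an explicit change of variables; there is no cap decomposition and no further integration by parts. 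So the lower bound on $\phi$ is not an auxiliary ingredient feeding a cap/IBP scheme --- it \emph{is} the entire mechanism replacing stationary phase. Your outline should be amended accordingly.

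A minor point on your last paragraph: for the second-derivative estimate \eqref{l4strichbis:2} in Corollary~\ref{cor:strichl4}, the term carrying $\nabla^2u$ is \emph{not} absorbed into a bounded amplitude (indeed $\nabla^2u\notin L^\infty$); the paper pulls $\nabla^2u$ outside the $\la$-integral and estimates it in $L^4(\H_u)$ via the extra hypothesis \eqref{aditionalassumption}, while the remaining $\la$-integral is handled by Plancherel/Cauchy--Schwarz in~$\la$ alone.
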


The proof of Proposition \ref{prop:TT*argument} is postponed to section \ref{sec:tts}. Let us now conclude the proof of Theorem \ref{mainth}. Note that $T_j$ satisfies
$$T_jf=\varphi_j\textrm{ if }a(t,x,\omega)=1\textrm{ for all }(t,x,\o)\in \MM\times\S$$
where $\varphi_j$ is the parametrix localized at frequency $2^j$ defined in \eqref{paraml}. Thus, the estimate \eqref{strichgeneral} follows immediately from \eqref{strichgeneral:bis}. This concludes the proof of Theorem \ref{mainth}. 

\subsection{Proof of Corollary \ref{cor:strichl4}}  

Note first that \eqref{l4strichbis} follows immediately from Theorem \ref{mainth} by choosing $p=q=4$ in \eqref{strichgeneral}, and noticing in view of \eqref{decopmpoM} and \eqref{compvolume1} that
\be\lab{44444}
L^4_{[0,1]}L^4(\Sigma_t)=L^4(\MM).
\ee

Next, we turn to the proof of the estimates \eqref{l4strichbis:1} and \eqref{l4strichbis:2} starting with the first one. 
In view of the definition \eqref{paraml} of $\varphi_j$, we have
\be\lab{444}
\nabla\varphi_j(t,x)=i2^j\int_{\S} \int_0^\infty  e^{i \la u(t,x,\o)}\nabla u(t,x,\o)(2^{-j}\la)\psi(2^{-j}\la)f(\la\o)\la^2d\la d\o.
\ee
Note that 
$$\nabla\varphi_j=i2^jT_jf,$$
with $\psi(\la)$ replaced by $\la \psi(\la)$, and with the choice
$$a(t,x,\o)=\nabla u(t,x,\o).$$
Since we have $\nabla u=b^{-1}N$ in view of \eqref{it1'}, \eqref{it3} and \eqref{it3bis}, we deduce from the assumption \eqref{regb} that 
$$\norm{a}_{L^\infty}\les\norm{\nabla u}_{L^\infty}\les \norm{b^{-1}}_{L^\infty}\les 1$$
so that $a$ satisfies the assumption \eqref{assbounda}. Thus, \eqref{strichgeneral:bis} with the choice $p=q=4$  yields in view of \eqref{44444}
$$\normm{\int_{\S} \int_0^\infty  e^{i \la u(t,x,\o)}\nabla u(t,x,\o)(2^{-j}\la)\psi(2^{-j}\la)f(\la\o)\la^2d\la d\o}_{L^4(\MM)}\les 2^{\frac{j}{2}}\norm{\psi(2^{-j}\la)f}_{L^2(\R^3)}.$$
Together with \eqref{444}, we obtain 
$$\norm{\nabla\varphi_j}_{L^4(\MM)}\les 2^{\frac{3j}{2}}\norm{\psi(2^{-j}\la)f}_{L^2(\R^3)}$$
which is the desired estimate \eqref{l4strichbis:1}.\\

Finally, we turn to the proof of the estimate \eqref{l4strichbis:2}. Differentiating \eqref{444}, we obtain
\bea
\nn\nabla_l\nabla_m\varphi_j(t,x)&=& -2^{2j}\int_{\S} \int_0^\infty  e^{i \la u(t,x,\o)}\nabla_lu(t,x,\o)\nabla_mu(t,x,\o)(2^{-j}\la)^2\psi(2^{-j}\la)f(\la\o)\la^2d\la d\o\\
\label{444:1}&& +i2^j\int_{\S} \int_0^\infty  e^{i \la u(t,x,\o)}\nabla_l\nabla_m u(t,x,\o)(2^{-j}\la)\psi(2^{-j}\la)f(\la\o)\la^2d\la d\o.
\eea
Next, we estimate the two terms in the right-hand side of \eqref{444:1} starting with the first one. Note that 
$$\int_{\S} \int_0^\infty  e^{i \la u(t,x,\o)}\nabla_lu(t,x,\o)\nabla_mu(t,x,\o)(2^{-j}\la)^2\psi(2^{-j}\la)f(\la\o)\la^2d\la d\o=T_jf,$$
with $\psi(\la)$ replaced by $\la^2\psi(\la)$, and with the choice
$$a(t,x,\o)=\nabla_lu(t,x,\o)\nabla_mu(t,x,\o).$$
Since we have $\nabla u=b^{-1}N$, we deduce from the assumption \eqref{regb} that 
$$\norm{a}_{L^\infty}\les\norm{\nabla u}^2_{L^\infty}\les \norm{b^{-2}}_{L^\infty}\les 1$$
so that $a$ satisfies the assumption \eqref{assbounda}. Thus, \eqref{strichgeneral:bis} with the choice $p=q=4$ yields in view of \eqref{44444}
\bea\lab{444:2}
\nn&&\normm{\int_{\S} \int_0^\infty  e^{i \la u(t,x,\o)}\nabla_lu(t,x,\o)\nabla_mu(t,x,\o)(2^{-j}\la)^2\psi(2^{-j}\la)f(\la\o)\la^2d\la d\o}_{L^4(\MM)}\\
&\les& 2^{\frac{j}{2}}\norm{\psi(2^{-j}\la)f}_{L^2(\R^3)}.
\eea

Next, we estimate the second term in the right-hand side of \eqref{444:1}. We have
\bea\lab{444:3}
&&\normm{\int_{\S} \int_0^\infty  e^{i \la u(t,x,\o)}\nabla_l\nabla_m u(t,x,\o)(2^{-j}\la)\psi(2^{-j}\la)f(\la\o)\la^2d\la d\o}_{L^4(\MM)}\\
\nn&\les& \int_{\S} \normm{\left(\int_0^\infty  e^{i \la u(t,x,\o)}(2^{-j}\la)\psi(2^{-j}\la)f(\la\o)\la^2d\la\right)\nabla_l\nabla_m u(t,x,\o)}_{L^4(\MM)} d\o\\
\nn&\les& \int_{\S} \normm{\int_0^\infty  e^{i \la u(t,x,\o)}(2^{-j}\la)\psi(2^{-j}\la)f(\la\o)\la^2d\la}_{L^4_u}\norm{\nabla^2u(.,\o)}_{L^4(\mathcal{H}_u)} d\o\\
\nn&\les& \int_{\S} \normm{\int_0^\infty  e^{i \la u(t,x,\o)}(2^{-j}\la)\psi(2^{-j}\la)f(\la\o)\la^2d\la}_{L^4_u} d\o,
\eea
where we used in the last inequality the assumption \eqref{aditionalassumption} on $\nabla^2u$. Now, we have
\bee
&&\normm{\int_0^\infty  e^{i \la u(t,x,\o)}(2^{-j}\la)\psi(2^{-j}\la)f(\la\o)\la^2d\la}_{L^4_u}\\
&\les &\normm{\int_0^\infty  e^{i \la u(t,x,\o)}(2^{-j}\la)\psi(2^{-j}\la)f(\la\o)\la^2d\la}^{\frac{1}{2}}_{L^\infty_u}\\
&&\times\normm{\int_0^\infty  e^{i \la u(t,x,\o)}(2^{-j}\la)\psi(2^{-j}\la)f(\la\o)\la^2d\la}^{\frac{1}{2}}_{L^2_u}\\
&\les&  \left(2^{\frac{j}{2}}\norm{\psi(2^{-j}\la)\la^2f}_{L^2_{\la}}\right)^{\frac{1}{2}}\norm{\psi(2^{-j}\la)\la^2f}_{L^2_{\la}}^{\frac{1}{2}}\\
&\les& 2^{\frac{j}{4}}\norm{\psi(2^{-j}\la)\la^2f}_{L^2_{\la}}
\eee
where we used Cauchy-Schwartz in $\la$ to evaluate the $L^\infty_u$ norm and Plancherel to evaluate the $L^2_u$ norm. In view of \eqref{444:3}, this yields
\bee
&&\normm{\int_{\S} \int_0^\infty  e^{i \la u(t,x,\o)}\nabla_l\nabla_m u(t,x,\o)(2^{-j}\la)\psi(2^{-j}\la)f(\la\o)\la^2d\la d\o}_{L^4(\MM)}\\
&\les& 2^{\frac{j}{4}}\int_{\S}\norm{\psi(2^{-j}\la)\la^2f}_{L^2_{\la}}d\o\\
&\les& 2^{\frac{5j}{4}}\norm{\psi(2^{-j}\la)f}_{L^2(\R^3)}
\eee
where we used Cauchy-Schwarz in $\o$ in the last inequality. Together with \eqref{444:1} and \eqref{444:2}, we finally obtain
$$\norm{\nabla^2\varphi_j}_{L^4(\MM)}\les 2^{\frac{5j}{2}}\norm{\psi(2^{-j}\la)f}_{L^2(\R^3)}$$
which is the desired estimate \eqref{l4strichbis:2}. This concludes the proof of Corollary \ref{cor:strichl4}.

\subsection{Proof of Proposition \ref{prop:TT*argument} (the $TT^*$ argument)}\lab{sec:tts}

We start with the following remark.
\begin{remark}\lab{rem:globalcoordsimple}
Fixing  a global system of coordinates  $x=(x^1, x^2, x^3)$ in $\Si_t$, such as the one described in section  \ref{sec:assumptioncoord}, we note in view of \eqref{assglobalcoordvol} that \eqref{strichgeneral:bis} is equivalent with the same inequality where the norm $L^q(\Sigma_t)$ on the  left-hand side is replaced by the corresponding  euclidean norm in the  given coordinates. More precisely we can assume from now on that
$$\norm{F}_{L^p_{[0,1]}L^q(\Sigma_t)}=\left(\int_0^1\left(\int_{\R^3}|F(t,x)|^q dx\right)^{\frac{p}{q}}dt\right)^{\frac{1}{q}}$$
which we will denote by a slight abuse of notation by
$$\norm{F}_{L^p_{[0,1]}L^q(\R^3)}.$$
Note also that in the $(t,x)$  coordinates $\MM=[0,1]\times \R^3$. 
\end{remark}

To prove Proposition \ref{prop:TT*argument}, we rely on  the standard  $TT^*$ argument for
 the Fourier integral operator \eqref{eq:op-Tj}. Note that  the operator  $T_j^*  $  takes real valued  functions $h$ on $\MM$ to complex valued functions on $\R^3$
$$T_j^*h(\la\o)=\psi(2^{-j} \la) \int_\MM a(s,y,\omega) e^{-i\la u(s,y, \o)} h(s,y) ds dy.$$
Therefore, the operator $U_j:=T_j T^*_j$ is given by the formula,
$$U_j h(t,x)=\int_{\S} \int_0^\infty \int_\MM e^{i \la u(t,x,\o)-i\la u(s,y,\o)}a(t,x,\omega)a(s,y,\omega)\psi(2^{-j}\la)^2h(s,y)\la^2d\la d\o   ds dy.$$
Note, in view of Remark \ref{rem:globalcoordsimple}, that \eqref{strichgeneral:bis} is equivalent to the following estimate
\be\lab{ttsineq}
\norm{ U_j h}_{L^p_{[0,1]}L^q(\R^3)}\les 2^{2jr}\norm{h}_{L^{p'}_{[0,1]}L^{q'}(\R^3)},
\ee
where $p'$ (resp. $q'$) is the conjugate exponent to $p$ (resp. $q$).
Observe that,
\bee
U_jh\left(\frac{t}{2^j},\frac{x}{2^j}\right) &=& 2^{-j}\int_{\S} \int_0^\infty \int_{2^j\MM} e^{i \la 2^j u\left(\frac{t}{2^j},\frac{x}{2^j},\o\right)-i\la 2^j u\left(\frac{s}{2^j},\frac{y}{2^j},\o\right)}a\left(\frac{t}{2^j},\frac{x}{2^j},\omega\right)a\left(\frac{s}{2^j},\frac{y}{2^j},\omega\right)\\
&&\times\psi(\la)^2h\left(\frac{s}{2^j},\frac{y}{2^j}\right)\la^2d\la d\o  ds dy
\eee
with $2^j\MM=[0,  2^j]\times \R^3$ relative to the rescaled  variables $(s,y)$.  Thus,
 setting,
 \bee
 Ah(t,x) :=& \ds\int_{\S} \int_0^\infty \int_{2^j\MM} e^{i \la 2^j u\left(\frac{t}{2^j},\frac{x}{2^j},\o\right)-i\la 2^j u\left(\frac{s}{2^j},\frac{y}{2^j},\o\right)}a\left(\frac{t}{2^j},\frac{x}{2^j},\omega\right)a\left(\frac{s}{2^j},\frac{y}{2^j},\omega\right)\\
& \times\psi(\la)^2h(s,y)\la^2d\la d\o  ds dy
\eee
we have 
$$U_jh\left(\frac{t}{2^j},\frac{x}{2^j}\right)=2^{-j}A h_j(t,x),\,\,\, h_j(s,y)=h\left(\frac{s}{2^j},\frac{y}{2^j}\right).$$ 
We easily infer that \eqref{ttsineq} is equivalent to  the estimate,
\be\lab{ttsineq1}
\norm{Ah}_{L^p_{[0,2^j]}L^q(\R^3)}\les \norm{h}_{L^{p'}_{[0,2^j]}L^{q'}(\R^3)}.
\ee

We introduce the kernel $K$ of $A$
\bea\lab{defkernel}
K(t,x,s,y)&=&\int_{\S} \int_0^\infty e^{i \la 2^j u\left(\frac{t}{2^j},\frac{x}{2^j},\o\right)-i\la 2^j u\left(\frac{s}{2^j},\frac{y}{2^j},\o\right)}a\left(\frac{t}{2^j},\frac{x}{2^j},\omega\right)a\left(\frac{s}{2^j},\frac{y}{2^j},\omega\right)\\
\nn&&\times\psi(\la)^2\la^2d\la d\o.
\eea

\begin{remark}
In the flat case, we have $u(t,x,\o)=-t+x\cdot\o$ so that 
$$2^ju\left(\frac{t}{2^j},\frac{x}{2^j},\o\right)=u(t,x,\o).$$
In particular, in the case $a=1$, $K$ is independent of $j$ 
$$K(t,x,s,y)=\int_{\S} \int_0^\infty e^{i \la u(t,x,\o)-i\la u(s,y,\o)}\psi(\la)^2\la^2d\la d\o.$$
\end{remark}

We have the following proposition.
\begin{proposition}\lab{prop:estkernel}
The kernel $K$ of the operator $A$ satisfies the  dispersive estimates,
\be\lab{estkernel} 
|K(t,x,s,y)|\les \frac{1}{|t-s|},\,\,\,\forall (t,x)\in 2^j\MM,\,\,\, \forall (s,y)\in 2^j\MM. 
\ee
\end{proposition}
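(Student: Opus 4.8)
The plan is to perform the $\la$-integration by a non-stationary phase argument, which turns the oscillatory factor into rapid decay in the rescaled phase difference, and then to reduce the remaining integral over $\S$ to a sublevel-set estimate which will follow from the lower bound for the phase proved in section \ref{sec:lowerb}. First I would write the rescaled phase difference as
$$\phi(t,x,s,y,\o)=2^ju\Big(\frac{t}{2^j},\frac{x}{2^j},\o\Big)-2^ju\Big(\frac{s}{2^j},\frac{y}{2^j},\o\Big),$$
which is real valued, and set $g(\la)=\psi(\la)^2\la^2$, a smooth function supported in $\{1/2\le\la\le2\}$, so that
$$K(t,x,s,y)=\int_\S\Big(\int_0^\infty e^{i\la\phi(t,x,s,y,\o)}g(\la)\,d\la\Big)a\Big(\frac{t}{2^j},\frac{x}{2^j},\o\Big)a\Big(\frac{s}{2^j},\frac{y}{2^j},\o\Big)\,d\o.$$
Since $g$ is smooth and compactly supported away from the origin, integrating by parts $N$ times in $\la$ (the boundary terms vanish) gives $\big|\int_0^\infty e^{i\la\phi}g(\la)\,d\la\big|\les_N(1+|\phi|)^{-N}$ for every $N\ge0$, and hence, using the hypothesis \eqref{assbounda} on $a$,
$$|K(t,x,s,y)|\les_N\int_\S\big(1+|\phi(t,x,s,y,\o)|\big)^{-N}\,d\o .$$

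If $|t-s|\le1$, then bounding the integrand by $1$ and using $|\S|\les1$ already yields $|K(t,x,s,y)|\les1\les|t-s|^{-1}$, so from now on I may assume $|t-s|\ge1$. For $\La\ge1$ put $E_\La=\{\o\in\S:\ |\phi(t,x,s,y,\o)|\le\La\}$. The heart of the matter is the sublevel-set estimate
$$|E_\La|\les\frac{\La}{|t-s|},\qquad\La\ge1$$
(which is trivial when $\La\ge|t-s|$ since $|E_\La|\le|\S|$, so its real content is for $|t-s|$ large). Granting this, I decompose $\S$ into the dyadic pieces $\{2^{k-1}\le1+|\phi|\le2^k\}$, $k\ge1$, on each of which $(1+|\phi|)^{-N}\les_N 2^{-Nk}$ and which is contained in $E_{2^k}$, to get
$$\int_\S(1+|\phi|)^{-N}\,d\o\les_N\sum_{k\ge1}2^{-Nk}|E_{2^k}|\les\frac{1}{|t-s|}\sum_{k\ge1}2^{(1-N)k}\les\frac{1}{|t-s|}$$
as soon as $N\ge2$; combined with the bound on $|K|$ above this proves \eqref{estkernel}.

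It remains to establish the sublevel-set estimate, and this is the only place where the geometry of the eikonal function enters; it will be reduced to the lower bound for $\phi$ proved in section \ref{sec:lowerb}. The natural route would be stationary phase in $\o$ applied to the oscillatory integral defining $K$, but this is unavailable under the regularity \eqref{reguintro}, since it would require $\po^2u\in L^\infty$ (cf. \eqref{staphase} and Remark \ref{rem:compstatphase}). Instead one has to extract, from the first-order information \eqref{regb}, \eqref{regpoN}, \eqref{ad1} and the controlled coordinates \eqref{assglobalcoordvol}, enough non-degeneracy of the map $\o\mapsto\phi(\o)$ to control $|E_\La|$ directly. The guiding picture is that, because $\o\mapsto N(\cdot,\o)$ is a near-isometry of $\S$ by \eqref{ad1}, the phase $\phi$ behaves quantitatively like the flat phase $-(t-s)+(x-y)\cdot\o$: away from a small cap on $\S$, or when the spatial separation of the two points is not comparable to $|t-s|$, one expects the direct lower bound $|\phi|\gtrsim|t-s|$ (so $E_\La$ is empty for $\La$ small), whereas inside the critical cap the relevant transverse second difference of $\phi$ has size $\gtrsim|t-s|$, which forces $E_\La$ to be a thin annulus of area $\les\La/|t-s|$.

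The main obstacle is precisely this sublevel-set estimate: with only one derivative of $u$ in $\o$ under control, the standard stationary-phase bound for $K$ cannot be invoked, and the quasi-orthogonality of the $\o$-family of phases $e^{i\la u(\cdot,\o)}$ (which in the smooth case is encoded in the non-vanishing of the Hessian at the stationary point) must instead be read off directly from the assumptions \eqref{regb}, \eqref{regpoN}, \eqref{ad1}; this is carried out in section \ref{sec:lowerb}. By contrast, the $\la$-integration by parts, the trivial bound for small $|t-s|$, and the dyadic summation above are routine.
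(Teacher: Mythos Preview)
Your approach is essentially the paper's: integrate by parts in $\la$ (the paper does it exactly twice, you do it $N$ times) to bound $|K|$ by $\int_\S(1+|\phi|)^{-N}\,d\o$, and then control this $\S$-integral via the lower bounds on $|\phi|$ established in section~\ref{sec:lowerb}. The only difference is packaging: the paper splits into the four cases of Lemma~\ref{lemma:key} and computes each $\S$-integral directly by an explicit change of variable, whereas you compress the same computation into a single sublevel-set bound $|E_\La|\les\La/|t-s|$ followed by a dyadic summation. By the layer-cake formula these two organizations are equivalent, and your heuristic picture (near-isometry of $\o\mapsto N(\cdot,\o)$, annular sublevel sets near a critical direction) correctly anticipates the geometric content of section~\ref{sec:lowerb}.

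One caveat worth flagging: the clean sublevel bound $|E_\La|\les\La/|t-s|$ does not quite follow from the four inequalities of Lemma~\ref{lemma:key} taken at face value. In the region $(s,y)\in A_{ext}$ with $\theta\ge\theta_1$, the purely quadratic lower bound \eqref{keylowb3} only yields $|E_\La\cap\{\theta\ge\theta_1\}|\les\sin\theta_1\,\sqrt{\La/|t-s|}$, and when $\theta_1$ is of order one (equivalently $m_0\gtrsim|t-s|$) this sums dyadically to $|K|\les|t-s|^{-1/2}$ rather than $|t-s|^{-1}$. To recover the sharp bound in that regime you must retain the linear term $|\o-\o_1|\,|v_0|$ from the finer expansion \eqref{oo11} (with $|v_0|$ controlled by \eqref{oo9}), not just the quadratic piece. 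So your deferral to ``section~\ref{sec:lowerb}'' is correct in spirit, but be aware that the summary statement of Lemma~\ref{lemma:key} is slightly too coarse for the uniform sublevel-set bound you quote; it is the intermediate identities of that section that actually deliver it.
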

The proof of Proposition \ref{prop:estkernel} is postponed to section \ref{sec:estkernel}. We now conclude the proof of Proposition \ref{prop:TT*argument}. \eqref{ttsineq1} follows from \eqref{estkernel} using interpolation and the Hardy-Littlewood inequality according to the standard procedure, see for example \cite{sog} and \cite{stein}. Finally, in view of the discussion above, \eqref{ttsineq1} yields \eqref{ttsineq} which in turn implies \eqref{strichgeneral:bis}. This concludes the proof of Proposition \ref{prop:TT*argument}.

\section{Proof of Proposition \ref{prop:estkernel} (bound on the kernel $K$)}\lab{sec:estkernel}

Let $\phi$ the scalar function on $\MM\times\MM\times\S$ defined as
\be\lab{defphi}
\phi(t,x,s,y,\o)=u(t,x,\o)-u(s,y,\o).
\ee
In view of \eqref{defkernel}, we may rewrite $K$ as 
$$K(t,x,s,y)=\int_{\S} \int_0^\infty e^{i \la 2^j \phi\left(\frac{t}{2^j},\frac{x}{2^j},\frac{s}{2^j},\frac{y}{2^j},\o\right)}a\left(\frac{t}{2^j},\frac{x}{2^j},\omega\right)a\left(\frac{s}{2^j},\frac{y}{2^j},\omega\right)\psi(\la)^2\la^2d\la d\o.$$
After integrating by parts twice in $\la$, and using the assumption \eqref{assbounda} on $a$ and the size of the support of $\psi$, this yields
\be\lab{ibpkernel}
|K(t,x,s,y)|\les \int_{\S}\frac{1}{1+2^{2j}\phi\left(\frac{t}{2^j},\frac{x}{2^j},\frac{s}{2^j},\frac{y}{2^j},\o\right)^2}d\o.
\ee
The next section is dedicated to the obtention of a lower bound on $|\phi|$ which will allow us to deduce \eqref{estkernel} from \eqref{ibpkernel}.

\begin{remark}\lab{rem:compstatphase}
It is at this stage that we depart from the standard strategy for proving Strichartz estimates. Indeed, the usual method  consists in using the stationary phase method to derive \eqref{estkernel}. To this end, one needs an identity of the type
\be\lab{usualstatmeth}
\phi=(s-t)A(\o-\o_0)\c (\o-\o_0)+o\left((s-t)(\o-\o_0)^2\right)
\ee 
for $\o$ in the neighborhood of some $\o_0\in\S$ and for some $3\times 3$ invertible matrix $A$. \eqref{usualstatmeth} then allows to perform a change of variables in $\o$ which ultimately leads to \eqref{estkernel}. In particular, the standard method requires at the least 
$$\partial_{t,x}\po^2 u\in L^\infty$$
just to derive \eqref{usualstatmeth}. Our assumptions correspond only to 
$$\partial_{t,x}\po u\in L^\infty.$$
Thus, in order to obtain \eqref{estkernel}, we instead integrate by parts in $\la$ to obtain \eqref{ibpkernel}, and then look for a suitable lower bound on $|\phi|$. In particular, we obtain lower bounds of the following type (see details in Lemma \ref{lemma:key})
\be\lab{usualstatmeth1}
|\phi|\gtrsim |s-t||\o-\o_0|^2
\ee 
for $\o$ in the neighborhood of some $\o_0\in\S$. The fundamental observation is that the inequality \eqref{usualstatmeth1} requires less regularity than the equality \eqref{usualstatmeth}.
\end{remark}

\subsection{The key lemma}

Let $(t,x)$ and $(s,y)$ in $\MM$, and let $\o\in\S$. In this section, we obtain a lower bound on $\phi(t,x,s,y,\o)$. We may assume
$$0\leq t<s\leq 1.$$
\begin{definition}
For any $\o\in\S$ and $\sigma\in\R$, let $\gamma_{\o}(\sigma)$ denote the null geodesic parametrized by the time function and with initial data
$$\gamma_\o(0)=(t,x),\, \gamma_\o'(0)=b^{-1}(t,x,\o)L(t,x,\o).$$
\end{definition}

Recall from \eqref{def:L'1} and \eqref{it2} that $b^{-1}L$ is geodesic. Thus, for any $\o\in\S$ and any $\sigma\in\R$, we have
\be\lab{idgeod}
u(\gamma_\o(\sigma),\o)=u(t,x,\o),\, \gamma_\o'(\sigma)=b^{-1}(\gamma_\o(\sigma),\o)L(\gamma_\o(\sigma),\o).
\ee

\begin{definition}
Let us define the subset $S$ of $\Sigma_s$ as
\be\lab{def:S}
S=\bigcup_{\o\in\S}\{\ga_\o(s-t)\}.
\ee
\end{definition}

We also define for all $(s,z)\in\Sigma_s$
\be\lab{def:m}
m(s,z)=\max_{\o\in\S}(u(s,z,\o)-u(t,x,\o)).
\ee
We have the following lemma characterizing the zeros of $m$.

\begin{lemma}\lab{lemma:S}
We have
$$S=\{p\in\Sigma_s,\,/\, m(p)=0\}.$$
\end{lemma}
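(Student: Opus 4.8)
The plan is to prove the two inclusions $S \subseteq \{m = 0\}$ and $\{m=0\} \subseteq S$ separately. For the first inclusion, fix $p = \gamma_{\o_0}(s-t) \in S$ for some $\o_0 \in \S$. By \eqref{idgeod} we have $u(p, \o_0) = u(t,x,\o_0)$, so $u(p,\o_0) - u(t,x,\o_0) = 0$, which shows $m(p) \geq 0$. The content is then to show $m(p) \leq 0$, i.e.\ that $u(p,\o) - u(t,x,\o) \leq 0$ for \emph{every} $\o \in \S$. This is where I expect the main work to lie: I would argue that along the null geodesic $\gamma_\o$ emanating from $(t,x)$, the value $u(\cdot,\o)$ is preserved (again by \eqref{idgeod}), while moving from $\gamma_\o(0)$ to any other point of $\Sigma_s$ can only increase $u(\cdot,\o)$ by a controlled amount governed by $T(u) = -|\nabla u| = -b^{-1}$ together with the gradient bound $\nabla u = b^{-1} N$ from \eqref{it1'}, \eqref{it3}, \eqref{it3bis}. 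Concretely, $u(s,z,\o) - u(t,x,\o) = \int$ of $du$ along a path; comparing the path to the integral curve of $L$ and using $L(u) = 0$ from \eqref{identities}, one sees that the maximal growth of $u(\cdot,\o)$ over the time interval $[t,s]$ is achieved exactly by following $-L(\cdot,\o)$ backward, i.e.\ the point where $u(s,\cdot,\o) - u(t,x,\o)$ attains its maximum $0$ is $\gamma_\o(s-t)$. Since the null cone structure forces $u(s,z,\o)\le u(t,x,\o)$ for $z$ in the causal future region swept by $\Sigma_s$, and $S$ consists precisely of the "tips" where equality holds simultaneously, we get $m(p) = 0$.

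For the reverse inclusion, suppose $m(p) = 0$ for some $p \in \Sigma_s$. Then there exists $\o_0 \in \S$ realizing the maximum, so $u(p,\o_0) - u(t,x,\o_0) = 0$ while $u(p,\o) - u(t,x,\o) \leq 0$ for all $\o$. The equality $u(p,\o_0) = u(t,x,\o_0)$ means $p$ and $(t,x)$ lie on the same level hypersurface $\H_{u_0}$ of $u(\cdot,\o_0)$, with $u_0 = u(t,x,\o_0)$. I would then use that $\H_{u_0}$ is a null hypersurface ruled by the geodesic null generators $b^{-1}L$ (from \eqref{def:L'1}), and that $p \in \Sigma_s$, $(t,x) \in \Sigma_t$: the unique null generator of $\H_{u_0}$ through $(t,x)$ is $\sigma \mapsto \gamma_{\o_0}(\sigma)$, and it meets $\Sigma_s$ at $\gamma_{\o_0}(s-t)$ (using that $\gamma_{\o_0}$ is parametrized by the time function $t$, so $\gamma_{\o_0}(s-t) \in \Sigma_s$). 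One needs to rule out that $p$ could be a different point of $\H_{u_0} \cap \Sigma_s$; this is where the maximality condition ($u(p,\o) \le u(t,x,\o)$ for \emph{all} $\o$, not just $\o_0$) gets used — it pins down $p$ to be on the generator through $(t,x)$ rather than somewhere else on the hypersurface. Granting that, $p = \gamma_{\o_0}(s-t) \in S$ by \eqref{def:S}.

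The main obstacle, as I see it, is making the "maximal growth of $u(\cdot,\o)$ is along $-L$" argument rigorous with only the regularity \eqref{reguintro}–\eqref{ad1} available — in particular justifying the comparison between an arbitrary point of $\Sigma_s$ and the geodesic endpoint $\gamma_\o(s-t)$ without differentiating $u$ more than once. I would handle this by working directly with the flow of $L$: since $L(u) = 0$ and $L = T + N$ with $T$ the unit normal to $\Sigma_t$ satisfying $T(u) = -b^{-1} < 0$, integrating $u(\cdot,\o)$ along a curve from $(t,x)$ to $(s,z)$ that is first tangent to $\Sigma_t$ (where the $N$-direction change in $u$ is controlled by $|\nabla u| = b^{-1}$) and then transported by $L$ yields the bound $u(s,z,\o) - u(t,x,\o) \le 0$ with equality iff the $\Sigma_t$-tangent part is trivial, i.e.\ iff $(s,z) = \gamma_\o(s-t)$. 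Collecting: $m(p) = 0$ iff $p$ is reached from $(t,x)$ by following $\gamma_\o$ for time $s-t$ for the maximizing $\o$, which is exactly the statement $S = \{m = 0\}$.
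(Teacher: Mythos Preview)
Your proposal has genuine gaps in both inclusions, and in each case the paper uses a concrete tool you have not identified.

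\textbf{Forward inclusion.} For $p=\gamma_{\o_0}(s-t)$ you want $u(p,\o)-u(t,x,\o)\le 0$ for every $\o$. Your argument compares $p$ to $\gamma_\o(s-t)$ and tries to bound the ``spatial drift'' of $u(\cdot,\o)$, but the claim that moving first along $\Sigma_t$ and then along $L$ gives $u(s,z,\o)-u(t,x,\o)\le 0$ is simply false for a general $z\in\Sigma_s$: moving along $\Sigma_t$ in the $+N(\cdot,\o)$ direction strictly increases $u(\cdot,\o)$, and $L$-transport then preserves that gain. (Indeed, for points in $A_{ext}$ the difference is positive.) The paper's actual argument is a one-line computation you do not mention: integrate $\dd u(\cdot,\o)$ along the \emph{fixed} geodesic $\gamma_{\o_0}$ to get
\[
u(p,\o)-u(t,x,\o)=\int_0^{s-t} b^{-1}(\gamma_{\o_0}(\sigma),\o)\,\gg\big(L(\gamma_{\o_0}(\sigma),\o),\,L(\gamma_{\o_0}(\sigma),\o_0)\big)\,d\sigma,
\]
and then use that the inner product of two future-directed null vectors is $\le 0$. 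Your path-comparison scheme never produces this structure.

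\textbf{Reverse inclusion.} You recognise that $u(p,\o_0)=u(t,x,\o_0)$ only places $p$ on the (two-dimensional) intersection $\H_{u_0}\cap\Sigma_s$, and that something else must single out the generator through $(t,x)$. But ``the maximality condition pins down $p$'' is not an argument. The missing idea is that $\o_0$ is a \emph{critical point} of $\o\mapsto u(p,\o)-u(t,x,\o)$, so one also has $\po u(p,\o_0)=\po u(t,x,\o_0)$; combined with \eqref{identities} this gives $u(p,\o_0)=u(\gamma_{\o_0}(s-t),\o_0)$ and $\po u(p,\o_0)=\po u(\gamma_{\o_0}(s-t),\o_0)$. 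Now invoke the hypothesis of section~\ref{sec:assumptioncoord} that $(u(s,\cdot,\o_0),\po u(s,\cdot,\o_0))$ is a global coordinate system on $\Sigma_s$, which forces $p=\gamma_{\o_0}(s-t)$. Without using $\po u$ and this coordinate assumption, there is no obvious way to upgrade ``same $\H_{u_0}$'' to ``same generator.''
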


The proof of Lemma \ref{lemma:S} is postponed to Appendix \ref{sec:lemmaS}. Next, we define the following two  subsets of $\Sigma_s$
\be\lab{defAintAext}
A_{int}=\{p\in\Sigma_s\,/\, m(p)<0\},\, A_{ext}=\{p\in\Sigma_s\,/\, m(p)>0\}.
\ee
Note in view of Lemma \ref{lemma:S} that 
\be\lab{union}
\Sigma_s=S\sqcup  A_{int}\sqcup A_{ext}.
\ee

\begin{remark}
In the flat case, the picture is the following:
\begin{enumerate}
\item The null geodesics\footnote{which are straight lines in this case} $\ga_\o$ span the light cone from $(t,x)$. In particular, the null geodesics $\ga_\o$ do not intersect except at $(t,x)$.

\item $S$ is the intersection\footnote{$S$ is a sphere in this case} of the forward light cone from $(t,x)$ with $\{s\}\times\R^3$.

\item $A_{int}$ and $A_{ext}$ correspond respectively to the interior and the exterior of $S$.
\end{enumerate}
Note that we do not need to prove these statements in our case. This is fortunate since these statements - while probably true in our general setting - would be delicate to establish (see for instance \cite{KRradius} for a proof of (1) on a space-time $(\MM,\gg)$ with limited regularity).
\end{remark}

Next, we introduce some further notations. First, we denote by $m_0$ the value of $m$ at $(s,y)$, i.e.
\be\lab{def:m0}
m_0=\max_{\o\in\S}(u(s,y,\o)-u(t,x,\o)).
\ee
We also denote by $\o_0$ an angle in $\S$ where the maximum in \eqref{def:m0} is achieved, i.e.
\be\lab{def:om0}
m_0=u(s,y,\o_0)-u(t,x,\o_0).
\ee

\begin{remark}
In the flat case, $\o_0$ is unique and corresponds to the angle of the projection of $(s,y)$ on $S$. Again, while this may be also true in our general setting, we do not need to prove this statement in our case.
\end{remark}

Note that if $(s,y)\in A_{ext}$, the function $u(s,y,\o)-u(t,x,\o)$ may change sign as $\o$ varies on $\S$.
We define
\be\lab{eo1}
D=\{\o\in\S\,/\, u(t,x,\o)=u(s,y,\o)\}.
\ee
The following lemma gives a precise description of $D$. 
\begin{lemma}\lab{lemma:D}
Let $(s,y)\in A_{ext}$. Let $D$ defined as in \eqref{eo1}. Let $(\theta, \varphi)$ denote the spherical coordinates with axis $\o_0$. Then, there exists a $C^1$ $2\pi$-periodic function  
$$\theta_1:[0,2\pi)\rightarrow (0,\pi)$$
such that in the coordinate system $(\theta, \varphi)$, $D$ is parametrized by
$$D=\{\theta=\theta_1(\varphi),\, 0\leq\varphi<2\pi\}.$$
\end{lemma}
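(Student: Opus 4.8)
The plan is to view $D$ as the zero set of the smooth function $F(\o) = u(s,y,\o) - u(t,x,\o)$ on $\S$ and to show, using the regularity assumptions \eqref{regb}, \eqref{regpoN}, \eqref{ad1} together with the fact that $(s,y)\in A_{ext}$, that $F$ is strictly monotone along each meridian emanating from the pole $\o_0$. First I would record that $F(\o_0) = m_0 > 0$ since $(s,y)\in A_{ext}$ and $\o_0$ realizes the maximum in \eqref{def:m0}; and that $F$ attains negative values somewhere on $\S$, because otherwise $m(s,y)\ge 0$ would fail to be compatible with $(s,y)\notin S$ — more precisely, one uses Lemma \ref{lemma:S} and the definition of $A_{ext}$ to see the sign of $F$ genuinely changes. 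Writing $\o = \o(\theta,\varphi)$ in spherical coordinates with axis $\o_0$, the key is to compute $\partial_\theta F = \po u(s,y,\o)\cdot \partial_\theta\o - \po u(t,x,\o)\cdot\partial_\theta\o$ and to show it is strictly negative for $\theta\in(0,\pi)$, uniformly away from $\theta=0$.

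The main step — and the main obstacle — is this strict monotonicity estimate. The natural way to get it is to integrate along the null geodesic: fix $\varphi$ and $\theta$, and compare $u(s,y,\o)$ with $u$ evaluated at the point $\ga_{\o}(s-t)\in S$, where $u(\ga_\o(s-t),\o) = u(t,x,\o)$ by \eqref{idgeod}. Then $F(\o) = u(s,y,\o) - u(\ga_\o(s-t),\o)$ is an increment of $u$ at fixed $\o$, which by $\nabla u = b^{-1}N$ (from \eqref{it1'}, \eqref{it3}, \eqref{it3bis}) and \eqref{regb} behaves like $b^{-1}$ times the $N$-directed signed distance from $\ga_\o(s-t)$ to $(s,y)$ inside $\Sigma_s$. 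Differentiating in $\theta$ and using \eqref{ad1} to control how $N(\cdot,\o)$ — equivalently the foot point $\ga_\o(s-t)$ — moves as $\o$ moves, one sees $\partial_\theta F$ picks up a definite sign: moving $\o$ toward the "far side" $\theta=\pi$ strictly decreases the distance functional. The $O(\ep)$ errors in \eqref{regb}, \eqref{regpoN}, \eqref{ad1} must be absorbed into the leading flat-case behavior, so I would first do the computation with $\ep=0$ (where $F(\o) = (s,y)\cdot\o - (t,x)\cdot\o - |y - x|$-type expression and $\partial_\theta F < 0$ on $(0,\pi)$ is elementary) and then perturb, which is why $\ep$ is taken small enough.

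Granting strict monotonicity of $\theta\mapsto F(\o(\theta,\varphi))$ on $(0,\pi)$ with $F>0$ at $\theta=0$ and $F<0$ at $\theta=\pi$ (the latter from the sign change, tracked to the antipodal meridian point), the intermediate value theorem and monotonicity give, for each $\varphi\in[0,2\pi)$, a unique $\theta_1(\varphi)\in(0,\pi)$ with $F = 0$; this is the set $D$. That $\theta=0$ and $\theta=\pi$ are genuinely excluded follows since $F(\o_0) = m_0 > 0$ and $F(-\o_0) < 0$ are both nonzero. Finally, $C^1$ regularity and $2\pi$-periodicity of $\theta_1$ follow from the implicit function theorem applied to $F(\o(\theta,\varphi)) = 0$: the hypothesis $\partial_\theta F \neq 0$ at $\theta = \theta_1(\varphi)$ is exactly the strict monotonicity just established, and $F$ is $C^1$ in $(\theta,\varphi)$ because $\po u$ and hence $N$ are (by \eqref{regpoN} and \eqref{ad1}, $\po N$ is bounded), while periodicity is inherited from that of the spherical coordinate chart. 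This completes the proof modulo the monotonicity estimate, which is where essentially all the work — and all the use of the smallness of $\ep$ — resides.
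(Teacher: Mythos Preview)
Your skeleton --- IVT for existence, a sign argument for uniqueness, IFT for $C^1$ --- is sound, and the existence and IFT steps match the paper. The gap is in the step you yourself flag as the heart of the matter: establishing $\partial_\theta F<0$ on $(0,\pi)$. Your proposed route, writing $F(\o)=u(s,y,\o)-u(\ga_\o(s-t),\o)$ and differentiating in $\theta$ while tracking how the foot point $\ga_\o(s-t)$ moves with $\o$, calls for angular regularity of the null geodesic flow that is not available from the hypotheses: the assumptions \eqref{regb}, \eqref{regpoN}, \eqref{ad1} amount to $\partial_{t,x}u,\ \partial_{t,x}\po u\in L^\infty$, with no control on $\po^2 u$ or on Jacobi-field quantities. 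Likewise the expansion \eqref{eo} around $\o_0$ carries $O(\ep)$ remainders that are not known to be differentiable in $\o$, so the perturb-off-flat argument you sketch cannot be closed at this regularity; near $\theta=0$, where the flat leading term $-\,|y-x|\sin\theta$ degenerates, this is exactly where a second angular derivative would be needed.

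The paper avoids global monotonicity entirely. For existence it uses \eqref{eo} to check $\phi(\o_0)=m_0>0$ and $\phi(-\o_0)<0$, then the intermediate value theorem along each meridian. For uniqueness and nondegeneracy it fixes \emph{some} $\o_1\in D$ on a given meridian and derives a \emph{second} expansion of $\phi$, this time centred at $\o_1$, namely \eqref{oo3}: the device is a curve $\eta\subset\Sigma_s$ with $\eta'$ proportional to $\po N(\cdot,\o_1)$ joining $\ga_{\o_1}(s-t)$ to $(s,y)$ (Lemma \ref{lemma:appendixd}), whose construction uses only first-order data and the global coordinate system of Section \ref{sec:assumptioncoord}. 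From \eqref{oo3} the sign of $\phi$ is read off on the entire meridian --- \eqref{oo11}--\eqref{oo12} give $\phi<0$ for $\theta_1<\theta\le\pi$ and \eqref{oo13}--\eqref{oo15} give $\phi>0$ for $0\le\theta<\theta_1$ --- which yields uniqueness of $\theta_1$ without any monotonicity claim. The same expansion, via \eqref{oo10}--\eqref{oo11} together with $|v_0|>0$ from \eqref{oo9}, gives $\partial_\theta\phi(\o_1)\neq 0$, after which the implicit function theorem delivers the $C^1$ regularity of $\theta_1(\varphi)$. The curve $\eta$ is precisely what substitutes for the missing second angular derivative.
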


The proof of Lemma \ref{lemma:D} is postponed to Appendix \ref{sec:lemmaD}. 

\begin{remark}
In the flat case, recall that $u(t,x,\o)=-t+x\c\o$. In this case, one easily checks that $D$ is a circle of axis $\o_0$ 
on the sphere $S$ which is generated by the tangents to $S$ through $y$ (see figure \ref{figbis}).
\end{remark}

Let $\o\in\S$. According to Lemma \ref{lemma:D}, the great half circle on $\S$ originating at $\o_0$ and containing $\o$ intersects  $D$ at a fixed point $\o_1$. Let $\th$ and $\th_1$ respectively denote  the positive angles between $\o_0$ and $\o$  (resp. $\o_0$ and $\o_1$). \\

In order to obtain a lower bound for $|\phi|$, we will argue differently according to whether $(s,y)$ belongs to the region $S$, $A_{int}$ or $A_{ext}$.
\begin{lemma}[Key lemma]\lab{lemma:key}
$|\phi|$ satisfies the following lower bounds
\begin{enumerate}
\item If $(s,y)\in S$, we have
\be\lab{keylowb1}
|\phi(t,x,s,y,\o)|\geq \frac{1}{4}|t-s||\o-\o_0|^2.
\ee

\item If $(s,y)\in A_{int}$, we have
\be\lab{keylowb2}
|\phi(t,x,s,y,\o)|\geq \frac{1}{8}|t-s||\o-\o_0|^2.
\ee

\item If $(s,y)\in A_{ext}$ and $\theta_1\leq\theta\leq \pi$, we have
\be\lab{keylowb3}
|\phi(t,x,s,y,\o)|\geq \frac{1}{4}|t-s||\o-\o_1|^2.
\ee

\item If $(s,y)\in A_{ext}$ and $0\leq\theta\leq\theta_1$, we have
\be\lab{keylowb4}
|\phi(t,x,s,y,\o)| \gtrsim \sqrt{\frac{1-\cos(\theta-\theta_1)}{1-\cos(\theta_1)}}m_0
\ee
\end{enumerate}
\end{lemma}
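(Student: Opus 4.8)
The plan is to estimate $\phi(t,x,s,y,\o)=u(t,x,\o)-u(s,y,\o)$ by comparing the point $(s,y)$ with the point $\ga_\o(s-t)\in S$ reached by flowing the null geodesic $\ga_\o$ from $(t,x)$, using crucially the identity $u(\ga_\o(s-t),\o)=u(t,x,\o)$ from \eqref{idgeod}. Thus $\phi(t,x,s,y,\o)=u(\ga_\o(s-t),\o)-u(s,y,\o)$, and since both points lie on $\Sigma_s$ we can use the global coordinate system $(u(s,\cdot,\o'),\po u(s,\cdot,\o'))$ on $\Sigma_s$ together with the regularity assumptions \eqref{regb}, \eqref{regpoN}, \eqref{ad1} and \eqref{assglobalcoordvol} to control differences of $u$ along $\Sigma_s$. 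The key geometric input is that, to leading order, $u(s,z,\o)-u(s,z,\o_0)$ behaves like a nondegenerate quadratic form in $\o-\o_0$ when $z$ is held fixed — this is where \eqref{regpoN} (invertibility of $\gg(\po N,\po N)$) and \eqref{ad1} enter — while the variation of $u(s,\cdot,\o)$ across $\Sigma_s$ is controlled linearly with $L^\infty$ bounds on $\nabla u=b^{-1}N$.

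For case (1), $(s,y)\in S$ means $y=\ga_{\o_0'}(s-t)$ for some $\o_0'$, and by Lemma \ref{lemma:S} this $\o_0'$ must coincide with the maximizer $\o_0$ since $m(s,y)=0$; then $\phi(t,x,s,y,\o)=u(\ga_\o(s-t),\o)-u(\ga_{\o_0}(s-t),\o)$, and I would Taylor-expand in $\o$ around $\o_0$, noting the first-order term vanishes because $\o_0$ realizes the maximum of $\o\mapsto u(s,y,\o)-u(t,x,\o)$ (so $\po u(s,y,\o_0)=\po u(t,x,\o_0)$, i.e. the $\po u$-coordinates agree), while the second-order term is comparable to $|s-t||\o-\o_0|^2$ by \eqref{regpoN} combined with the distance traveled along $\ga_\o$ being $\simeq s-t$; the constant $\tfrac14$ absorbs the $O(\ep)$ errors. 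Case (2) is a perturbation of case (1): when $(s,y)\in A_{int}$, $y$ lies strictly inside $S$, and I would compare $(s,y)$ to the point $\ga_{\o_0}(s-t)\in S$ on the same geodesic direction, write $\phi$ as the sum of the case-(1) quantity plus the correction $u(\ga_{\o_0}(s-t),\o)-u(s,y,\o)$, and argue the correction does not spoil the sign — using $m(s,y)<0$ to control its sign and magnitude — at the cost of replacing $\tfrac14$ by $\tfrac18$. Case (3) is case (1) and (2) with $\o_0$ replaced by $\o_1\in D$: on the portion $\theta_1\le\theta\le\pi$ of the great circle, $\o_1$ plays the role of the "projection onto $S$" relative to the direction $\o$, by Lemma \ref{lemma:D} the function $u(t,x,\cdot)-u(s,y,\cdot)$ vanishes at $\o_1$ and is monotone, so the same quadratic-lower-bound argument applies around $\o_1$.

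Case (4) is the genuinely different and hardest case: for $0\le\theta\le\theta_1$ with $(s,y)\in A_{ext}$, the quantity $\phi$ no longer has a clean quadratic zero structure and we must extract the stated bound $|\phi|\gtrsim\sqrt{(1-\cos(\theta-\theta_1))/(1-\cos\theta_1)}\,m_0$ in terms of $m_0=\max_\o(u(s,y,\o)-u(t,x,\o))>0$. I expect the main obstacle here to be getting the right comparison between the one-dimensional variable $\theta$ along the great half-circle and the full angular variation: the idea is to restrict attention to the great half-circle from $\o_0$ through $\o$ (which hits $D$ exactly at $\theta=\theta_1$ by Lemma \ref{lemma:D}), parametrize $\psi(\theta):=u(s,y,\o(\theta))-u(t,x,\o(\theta))$, note $\psi(0)=m_0$, $\psi(\theta_1)=0$, and that $\psi$ is, up to $O(\ep)$, a concave quadratic in the chordal variable with vertex at $\theta=0$; then interpolating between the value $m_0$ at $\theta=0$ and the zero at $\theta_1$ forces $\psi(\theta)\simeq m_0\,(1-(\text{something like }\tfrac{1-\cos\theta}{1-\cos\theta_1}))$, and a further elementary trigonometric manipulation of $1-\tfrac{1-\cos\theta}{1-\cos\theta_1}$ for $0\le\theta\le\theta_1$ yields a quantity comparable to $\tfrac{1-\cos(\theta_1-\theta)}{1-\cos\theta_1}$ — actually its square root after accounting for how $|\o-\o_1|^2\simeq 1-\cos(\theta-\theta_1)$ enters. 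Throughout I would keep all $\ep$-dependent errors on the side of the favorable inequality, which is legitimate since $\ep$ is a small universal constant, and I would defer the verifications of Lemma \ref{lemma:S} and Lemma \ref{lemma:D} to their appendices as the paper does.
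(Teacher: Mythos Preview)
There is a genuine gap in your approach for cases (1)--(3). You propose to Taylor-expand $\o\mapsto\phi(t,x,s,y,\o)$ to second order around $\o_0$ (or $\o_1$), but this requires control of $\po^2 u$ in $L^\infty$, which is \emph{not} among the assumptions \eqref{regb}--\eqref{ad1}. As the paper stresses in Remark~\ref{rem:compstatphase}, avoiding the extra regularity $\pr_{t,x}\po^2 u\in L^\infty$ is precisely the point of the whole argument; the hypotheses correspond only to $\pr_{t,x}\po u\in L^\infty$. The assumption \eqref{regpoN} you invoke controls $\gg(\po N,\po N)$, i.e.\ essentially $|\nabla\po u|$, not $\po^2 u$, and so it does not furnish the Hessian in $\o$ that your quadratic expansion needs.

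The paper's key device, which your proposal misses, is to bypass the second $\o$-derivative entirely. For case~(1) one writes
\[
u(s,y,\o)-u(t,x,\o)=\int_0^{s-t}b^{-1}(\ga_{\o_0}(\sigma),\o)\,\gg\bigl(L(\ga_{\o_0}(\sigma),\o),L(\ga_{\o_0}(\sigma),\o_0)\bigr)\,d\sigma,
\]
integrating $\dd u(\cdot,\o)$ along the single geodesic $\ga_{\o_0}$, and then uses the algebraic identity $\gg(L_\o,L_{\o_0})=-\tfrac12|N_\o-N_{\o_0}|^2$ together with \eqref{ad1} to obtain the quadratic lower bound directly, with no second derivative in $\o$ taken anywhere. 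For cases (2) and (3)--(4) the same geodesic integral is supplemented by an explicitly constructed curve in $\Sigma_s$ (the curves $\mu$ and $\eta$ of Lemmas~\ref{lemma:appendixc} and~\ref{lemma:appendixd}) connecting $\ga_{\o_0}(s-t)$, respectively $\ga_{\o_1}(s-t)$, to $(s,y)$; the global coordinate system of section~\ref{sec:assumptioncoord} is used essentially to show these curves actually reach $(s,y)$. This produces the exact expansions \eqref{ie4} and \eqref{oo3}, from which all four lower bounds are read off. Your sketch for case~(4) is in the right spirit, but it too ultimately rests on having the expansion \eqref{oo3} with its explicit linear term in $\o-\o_1$, and that expansion is not obtained by Taylor expanding in $\o$.
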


The proof of Lemma \ref{lemma:key} is postponed to section \ref{sec:lowerb}. 

\begin{figure}[t]
\vspace{-4cm}
\begin{center}
\hspace{-4.0005cm}\includegraphics[width=20cm, height=13cm]{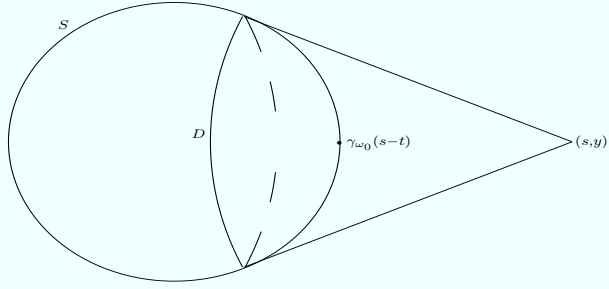}
\vspace{-6.5cm}
\caption{Representation of $D$ in the flat case}\lab{figbis}
\end{center}
\end{figure}

\begin{remark}\lab{reminiscentoverlap}
The proof of Lemma \ref{lemma:key} is inspired by the overlap estimates for wave packets derived in \cite{Sm} and \cite{SmTa} in the context of Strichartz estimates respectively for $C^{1,1}$ and $H^{2+\ep}$ metrics. Note however that the estimates in these papers rely heavily on a direct comparison of various quantities with the corresponding ones in the flat case. Such direct comparisons do not hold in our framework. Here, the closeness to the flat case manifests itself in the small constant $\ep$ in the right-hand side of \eqref{regb}, \eqref{regpoN} and \eqref{ad1}, and in the existence of the global coordinates systems of section \ref{sec:assumptioncoord}.
\end{remark}

\subsection{Proof of Proposition \ref{prop:estkernel}}

Recall that we need to show that the kernel $K$ defined in \eqref{defkernel} satisfies the upper bound \eqref{estkernel}. To this end, we will use the estimate \eqref{ibpkernel} for $K$ together with the estimates provided by Lemma \ref{lemma:key}. We argue differently according according to whether $(s,y)$ belongs to $S$, $A_{int}$ or $A_{ext}$. 

\subsubsection{The case $(s,y)\in S$}

If $(s,y)$ belongs to $S$, we have the lower bound \eqref{keylowb1} for $|\phi|$
$$|\phi(t,x,s,y,\o)|\geq \frac{1}{4}|t-s||\o-\o_0|^2,$$
where $\o_0\in\S$ is an angle satisfying \eqref{def:om0}. Then, we deduce
$$2^j\left|\phi\left(\frac{t}{2^j},\frac{x}{2^j},\frac{s}{2^j},\frac{y}{2^j},\o\right)\right|\geq \frac{1}{4}|t-s||\o-\o_0|^2.$$
Together with \eqref{ibpkernel}, this yields
$$|K(t,x,s,y)|\les \int_{\S}\frac{d\o}{1+|t-s|^2|\o-\o_0|^4}.$$
Using the spherical coordinates $(\theta, \varphi)$ with axis $\o_0$, we obtain
$$|K(t,x,s,y)|\les \int_0^{\pi}\frac{\sin(\theta)d\theta}{1+|t-s|^2(1-\cos(\theta))^2}.$$
Performing the change of variables
$$z=|t-s|(1-\cos(\theta))$$
we obtain
$$|K(t,x,s,y)|\les \frac{1}{|t-s|}\int_0^{+\infty}\frac{dz}{1+z^2}.$$
This implies
\be\lab{ok}
|K(t,x,s,y)|\les \frac{1}{|t-s|},\,\,\,\forall (t,x)\in 2^j\MM,\,\,\, \forall \left(\frac{s}{2^j},\frac{y}{2^j}\right)\in S 
\ee
which is the desired estimate.

\subsubsection{The case $(s,y)\in A_{int}$}

If $(s,y)$ belongs to $A_{int}$, we have the lower bound \eqref{keylowb2} for $|\phi|$
$$|\phi(t,x,s,y,\o)|\geq \frac{1}{8}|t-s||\o-\o_0|^2,$$
where $\o_0\in\S$ is an angle satisfying \eqref{def:om0}. Arguing as in the previous case, we obtain
\be\lab{ok1}
|K(t,x,s,y)|\les \frac{1}{|t-s|},\,\,\,\forall (t,x)\in 2^j\MM,\,\,\, \forall \left(\frac{s}{2^j},\frac{y}{2^j}\right)\in A_{int}. 
\ee

\subsubsection{The case $(s,y)\in A_{ext}$}

If $(s,y)$ belongs to $A_{ext}$, recall that $\o_1$ is in $D$ such that $\o$, $\o_1$ and $\o_0$ are on the same half great  circle of $\S$, and that $\th$ and $\th_1$ denote respectively the positive angles between $\o_0$ and $\o$  (resp. $\o_0$ and $\o_1$).\\

\noindent{\bf The case $\theta_1\leq \theta\leq \pi$.} If $\theta_1\leq \theta\leq \pi$, we have the lower bound \eqref{keylowb3} for $|\phi|$
$$|\phi(t,x,s,y,\o)| \geq  \frac{1}{2}|t-s||\o-\o_1|^2.$$
Then, we deduce
$$2^j\left|\phi\left(\frac{t}{2^j},\frac{x}{2^j},\frac{s}{2^j},\frac{y}{2^j},\o\right)\right|\geq \frac{1}{2}|t-s||\o-\o_1|^2.$$
Together with \eqref{ibpkernel}, this yields
$$|K(t,x,s,y)|\les \int_{\S}\frac{d\o}{1+|t-s|^2|\o-\o_1|^4}.$$
Using the spherical coordinates $(\theta, \varphi)$ with axis $\o_0$, we parametrize $\S$ by $(\theta-\theta_1(\varphi), \varphi)$ where $\varphi\to\theta_1(\varphi)$ is defined in Lemma \ref{lemma:D}. We obtain
$$|K(t,x,s,y)|\les \int_0^{2\pi}\int_{\theta_1(\varphi)}^{\pi}\frac{\sin(\theta-\theta_1(\varphi))}{1+|t-s|^2(1-\cos(\theta-\theta_1(\varphi)))^2}d\theta d\varphi$$
and thus
$$|K(t,x,s,y)|\les \int_0^{\pi}\frac{\sin(\theta)}{1+|t-s|^2(1-\cos(\theta))^2}d\theta.$$
Performing the change of variable
$$z=|t-s|(1-\cos(\theta))$$
we obtain
$$|K(t,x,s,y)|\les \frac{1}{|t-s|}\int_0^{+\infty}\frac{dz}{1+z^2}.$$
This implies
\be\lab{ok3}
|K(t,x,s,y)|\les \frac{1}{|t-s|},\,\,\,\forall (t,x)\in 2^j\MM,\,\,\, \forall \left(\frac{s}{2^j},\frac{y}{2^j}\right)\in A_{ext}\textrm{ with }\theta_1\leq \theta\leq \pi 
\ee
which is the desired estimate.

\vspace{0.5cm} 

\noindent{\bf The case $0\leq\theta\leq\theta_1$.} Finally, if $0\leq\theta\leq\theta_1$, we have the lower bound \eqref{keylowb4} for $|\phi|$
$$|\phi(t,x,s,y,\o)| \gtrsim \sqrt{\frac{1-\cos(\theta-\theta_1)}{1-\cos(\theta_1)}}m_0.$$
We then deduce
$$2^j\left|\phi\left(\frac{t}{2^j},\frac{x}{2^j},\frac{s}{2^j},\frac{y}{2^j},\o\right)\right|\gtrsim 2^j\sqrt{\frac{1-\cos(\theta-\theta_1)}{1-\cos(\theta_1)}}m_j$$
where $m_j$ is defined as 
$$m_j=\max_{\o\in\S}\left(u\left(\frac{s}{2^j},\frac{y}{2^j},\o\right)-u\left(\frac{t}{2^j},\frac{x}{2^j},\o\right)\right).$$
Together with \eqref{ibpkernel}, this yields
$$|K(t,x,s,y)|\les \int_0^{\theta_1}\frac{\sin(\theta)}{1+2^{2j}m_j^2\frac{1-\cos(\theta-\theta_1)}{1-\cos(\theta_1)}}d\theta.$$
Performing the change of variable
$$z=2^jm_j\sqrt{\frac{1-\cos(\theta-\theta_1)}{1-\cos(\theta_1)}}$$
and using \eqref{oo7} and the fact that 
$$\frac{\sin(\theta)}{\sqrt{1-\cos(\theta)}}\les 1\textrm{ and }\sin(\theta)\les\sin(\theta_1)\textrm{ on }0\leq\theta\leq\theta_1\leq\frac{\pi}{2}+O(\ep),$$
we obtain
\be\lab{oo16}
|K(t,x,s,y)|\les \frac{\sin(\theta_1)\sqrt{1-\cos(\theta_1)}}{2^jm_j}\int_0^{+\infty}\frac{dz}{1+z^2}\les  \frac{\sin(\theta_1)\sqrt{1-\cos(\theta_1)}}{2^jm_j}.
\ee
Now, in view of \eqref{oo6}, we have
$$\sin(\theta_1)\sqrt{1-\cos(\theta_1)}\les\frac{1}{1+\frac{|t-s|}{2^jm_j}}.$$
Together with \eqref{oo16}, we obtain
$$|K(t,x,s,y)|\les  \frac{1}{2^jm_j+|t-s|}$$
which implies
\be\lab{ok4}
|K(t,x,s,y)|\les \frac{1}{|t-s|},\,\,\,\forall (t,x)\in 2^j\MM,\,\,\, \forall \left(\frac{s}{2^j},\frac{y}{2^j}\right)\in A_{ext}\textrm{ with }0\leq\theta\leq\theta_1. 
\ee

Finally, \eqref{union}, \eqref{ok}, \eqref{ok1}, \eqref{ok3} and \eqref{ok4} yield \eqref{estkernel} which concludes the proof of Proposition \ref{prop:estkernel}.

\section{Proof of Lemma \ref{lemma:key} (Lower bound for $|\phi|$)}\lab{sec:lowerb}

\subsection{A lower bound for $|\phi|$ when $(s,y)\in S$ (proof of \eqref{keylowb1})} 

In view of the definition of $S$, there is $\o_0\in\S$ such that
$$(s,y)=\gamma_{\o_0}(s-t).$$
In view of \eqref{idgeod}, this yields
\bea\lab{eee}
u(s,y,\o)-u(t,x,\o) &=& u(\gamma_{\o_0}(s-t),\o)-u(t,x,\o)\\
\nn&=& \int_0^{s-t}\gg(\dd u, \gamma_{\o_0}'(\sigma)d\sigma\\
\nn&=&  \int_0^{s-t}b^{-1}(\gamma_{\o_0}(\sigma),\o)\gg(L(\gamma_{\o_0}(\sigma),\o), L(\gamma_{\o_0}(\sigma),\o_0)d\sigma\\
\nn&=&  -\frac{1}{2}\int_0^{s-t}b^{-1}(\gamma_{\o_0}(\sigma),\o)|N(\gamma_{\o_0}(\sigma),\o)-N(\gamma_{\o_0}(\sigma),\o_0)|^2d\sigma\\
\nn&\leq& -\frac{1}{4}|t-s||\o-\o_0|^2,
\eea 
where we used in the last inequality the estimates \eqref{regb} and \eqref{ad1} with $\ep>0$ small enough. \eqref{eee} implies for all $(s,y)\in S$ and all $\o\in\S$
\be\lab{eee1}
|\phi(t,x,s,y,\o)|\geq \frac{1}{4}|t-s||\o-\o_0|^2,
\ee
which is the desired estimate \eqref{keylowb1}.

\subsection{A lower bound for $|\phi|$ when $(s,y)\in A_{int}$ (proof of \eqref{keylowb2})}\lab{sec:cccc}

Recall that $m_0<0$ since $(s,y)\in A_{int}$. Let $\o_0\in\S$ an angle satisfying \eqref{def:om0}. Then, we have in particular
$$\po u(s,y,\o_0)=\po u(t,x,\o_0).$$
Together with \eqref{identities}, this yields
\be\lab{ie}
\po u(s,y,\o_0)=\po u(\gamma_{\o_0}(s-t),\o_0). 
\ee

In view of the assumption \eqref{regpoN}, the $2\times 2$ matrix
$$\gg(\po N, \po N)$$
is invertible. We define the vector of $T\S$ $a$ as
\be\lab{iebis}
a=\gg(\po N, \po N)^{-1}\po b.
\ee
Note in view of \eqref{regb} and \eqref{regpoN} that $a$ satisfies the estimate
\be\lab{iequatre}
\norm{a}_{L^\infty}\les\ep.
\ee
For $\sigma\in\R$, let us consider the curve $\mu(\sigma)$ defined by 
\be\lab{ieter}
\left\{\ba{lll}
\mu'(\sigma)&=&b(\mu(\sigma),\o_0)N(\mu(\sigma),\o_0)+a(\mu(\sigma),\o_0)\c\po N(\mu(\sigma),\o_0), \\
\mu(0)&=&\gamma_{\o_0}(s-t).
\ea\right.
\ee

\begin{remark}
In the flat case, the curve $\mu$ is simply the segment of straight line between $\gamma_{\o_0}(s-t)$ and $(s,y)$ (see figure \ref{fig4}).
\end{remark}
 \begin{figure}[b]
\vspace{-4.5cm}
\begin{center}
\hspace{-4.0005cm}\includegraphics[width=20cm, height=15cm]{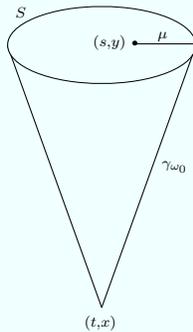}
\vspace{-6cm}
\caption{The case $(s,y)\in A_{int}$ in the flat case}\lab{fig4}
\end{center}
\end{figure}

\begin{lemma}\lab{lemma:appendixc}
Let $\mu$ the curve defined in \eqref{ieter}. Then, we have 
\be\lab{eq:appendixc}
(s,y)=\mu(m_0).
\ee
\end{lemma}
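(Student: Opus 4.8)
The plan is to show that the curve $\mu$, which starts at $\gamma_{\o_0}(s-t)$ and is defined by the ODE \eqref{ieter}, reaches the point $(s,y)$ precisely at parameter value $\sigma = m_0$. The strategy is to identify a function that is monotone along $\mu$ and to use uniqueness of the integral curve together with the coordinate system of section \ref{sec:assumptioncoord}. Concretely, I would first observe that $\mu$ stays on $\Sigma_s$ up to lower order: since $\mu(0) = \gamma_{\o_0}(s-t) \in \Sigma_s$ and the tangent vector in \eqref{ieter} is $b N + a\cdot\po N$, which is a combination of vectors tangent to $\Sigma_t$-level sets only approximately, I need to check that the $t$-component is actually constant, i.e. that $\mu(\sigma) \in \Sigma_s$ for all $\sigma$. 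Using $L = T + N$ so $bN = L - bT = L + T(u)T$ — actually the cleaner route is to note that $N$ and $\po N$ are both tangent to $\Sigma_s$ (they are spatial vectorfields), hence $\mu'(\sigma)$ is tangent to $\Sigma_s$ at every point, so $\mu(\sigma) \in \Sigma_s$ for all $\sigma$.

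Next, I would compute the derivative of $\sigma \mapsto u(\mu(\sigma),\o_0)$ and the derivative of $\sigma \mapsto \po u(\mu(\sigma),\o_0)$ along $\mu$. For the first, $\frac{d}{d\sigma}u(\mu(\sigma),\o_0) = \gg(\dd u, \mu'(\sigma)) = b^{-1}\gg(L, bN + a\cdot\po N) = \gg(L, N) + b^{-1} a \cdot \gg(L,\po N)$. Since $L = T+N$ with $\gg(T,N) = 0$ and $\gg(N,N)=1$, we get $\gg(L,N)=1$; and differentiating $\gg(N,N)=1$ in $\o$ gives $\gg(N,\po N)=0$ (this is \eqref{identities1}), while $\gg(T,\po N) = \po\gg(T,N) - \gg(\po T, N)$ — here one uses that $T$ is $\o$-independent, so $\po T = 0$ and $\gg(T,\po N)=0$; hence the second term vanishes and $\frac{d}{d\sigma}u(\mu(\sigma),\o_0) = 1$. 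Therefore $u(\mu(\sigma),\o_0) = u(\gamma_{\o_0}(s-t),\o_0) + \sigma = u(t,x,\o_0) + \sigma$ using \eqref{idgeod}. At $\sigma = m_0$ this gives $u(\mu(m_0),\o_0) = u(t,x,\o_0) + m_0 = u(s,y,\o_0)$ by \eqref{def:om0}.

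For the $\o$-derivative, I would compute $\frac{d}{d\sigma}\po u(\mu(\sigma),\o_0) = \gg(\dd(\po u), \mu'(\sigma))$. Writing $\dd(\po u) = \po(\dd u) = \po(b^{-1}L) b\,(\cdot)$ — more carefully, $L'(\po u) = 0$ from \eqref{identities} gives $\gg(\dd \po u, L) = 0$, so the $bN$ part of $\mu'$ contributes (the $L$-direction piece) in a controlled way; the main point is that the choice $a = \gg(\po N,\po N)^{-1}\po b$ in \eqref{iebis} is engineered exactly so that $\frac{d}{d\sigma}\po u(\mu(\sigma),\o_0) = 0$. Granting this, $\po u(\mu(\sigma),\o_0)$ is constant, equal to $\po u(\gamma_{\o_0}(s-t),\o_0) = \po u(t,x,\o_0) = \po u(s,y,\o_0)$, the last equality being \eqref{ie}. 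So at $\sigma = m_0$ the point $\mu(m_0)$ lies on $\Sigma_s$ and has the same values of both $u(\cdot,\o_0)$ and $\po u(\cdot,\o_0)$ as $(s,y)$; since $(u(\cdot,\o_0),\po u(\cdot,\o_0))$ is a global coordinate system on $\Sigma_s$ by the assumption of section \ref{sec:assumptioncoord}, we conclude $\mu(m_0) = (s,y)$, which is \eqref{eq:appendixc}.

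The main obstacle I anticipate is the bookkeeping in the $\po u$ computation: one must carefully expand $\gg(\dd(\po u),\, bN + a\cdot\po N)$, use $\dd(\po u) = \po(\dd u)$ combined with $\dd u = b^{-1}L$, differentiate $\gg(L,L)=0$ and $\gg(L, \cdot)$ identities in $\o$, and verify that the defining relation \eqref{iebis} for $a$ makes the result vanish — this is where the invertibility of $\gg(\po N,\po N)$ from \eqref{regpoN} is used, and one must keep track of the $O(\ep)$ terms to ensure $\mu$ is well-defined on the relevant $\sigma$-interval (using \eqref{iequatre}). A secondary technical point is confirming existence/uniqueness of the ODE \eqref{ieter} on a parameter interval containing $[0,m_0]$, which follows from the $L^\infty$ bounds \eqref{regb}, \eqref{regpoN}, \eqref{iequatre} and the regularity of $N,\po N$, together with the fact that $|m_0|$ is bounded (since $u$ is Lipschitz by \eqref{reguintro} and $(t,x),(s,y)$ lie in the bounded region $\MM$).
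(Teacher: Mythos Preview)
Your proposal is correct and follows essentially the same approach as the paper: compute that $u(\mu(\sigma),\o_0)$ has derivative $1$ and $\po u(\mu(\sigma),\o_0)$ has derivative $0$ along $\mu$ (the latter being precisely why $a$ is defined by \eqref{iebis}), then invoke the global coordinate system on $\Sigma_s$ from section \ref{sec:assumptioncoord} to identify $\mu(m_0)$ with $(s,y)$. Your explicit remark that $\mu$ stays on $\Sigma_s$ because $N$ and $\po N$ are tangent to $\Sigma_s$ is a useful clarification that the paper leaves implicit.
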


The proof of Lemma \ref{lemma:appendixc} is postponed to Appendix \ref{sec:appendixc}. \eqref{eq:appendixc}  yields
\bee
 u(s,y,\o)-u(t,x,\o) &=& u(\mu(m_0),\o)-u(\mu(0),\o)+u(\gamma_{\o_0}(s-t),\o)-u(t,x,\o)\\
&=& \int_0^{m_0}\gg(\nabla u(\mu(\sigma),\o), \mu'(\sigma))d\sigma +\int_t^s\gg(\dd u(\gamma_{\o_0}(\sigma),\o), \gamma_{\o_o}'(\sigma))d\sigma\\
&=& \int_0^{m_0} b^{-1}(\mu(\sigma),\o)\Big(b(\mu(\sigma),\o_0)\gg(N(\mu(\sigma),\o), N(\mu(\sigma),\o_0))\\
&&+a(\mu(\sigma),\o_0)\c\gg(\po N(\mu(\sigma),\o_0), N(\mu(\sigma),\o))\Big) \\
&&+\int_t^sb^{-1}(\gamma_{\o_0}(\sigma),\o)b^{-1}(\gamma_{\o_0}(\sigma),\o_0)\gg(L(\gamma_{\o_0}(\sigma),\o),L(\gamma_{\o_0}(\sigma),\o_0)).
\eee
We obtain
\bee
 u(s,y,\o)-u(t,x,\o) &= & -\int_0^{|m_0|}\Big(\gg(N(\mu(\sigma),\o),N(\mu(\sigma),\o_0))+O(\ep)\Big)d\sigma\\
&& -\frac{1}{2}\int_t^s|N(\gamma_{\o_0}(\sigma),\o)-N(\gamma_{\o_0}(\sigma),\o_0)|^2(1+O(\ep))d\sigma
\eee
where we used the estimates \eqref{regb} and \eqref{regpoN}, the identity \eqref{identities1}, the fact that $s>t$, and the fact that $m_0<0$ since $(s,y)\in A_{int}$. Together with \eqref{ad1}, this yields
\be\lab{ie4}
u(s,y,\o)-u(t,x,\o) =  -\frac{1}{2}|t-s||\o-\o_0|^2(1+O(\ep))-|m_0|(\o\cdot\o_0+O(\ep)).
\ee
In particular we deduce for $\ep>0$ small enough
\be\lab{ie5}
u(s,y,\o)-u(t,x,\o) \leq  -\frac{1}{4}|t-s||\o-\o_0|^2\textrm{ for all }\o\textrm{ such that }\o\cdot\o_0\geq \frac{1}{4}.
\ee

Since $\o_0$ is an angle where the maximum in the definition \eqref{def:m0} of $m_0$ is attained, we have for all $\o\in\S$, in view of \eqref{ie3} and \eqref{ie4}, and the fact that $m_0<0$ 
$$-|m_0|\geq -\frac{1}{2}|t-s||\o-\o_0|^2(1+O(\ep))-|m_0|(\o\cdot\o_0+O(\ep)).$$
This yields
$$|m_0|\leq |t-s|(1+O(\ep)).$$
Injecting back in \eqref{ie4}, we obtain for $\ep>0$ small enough
$$u(s,y,\o)-u(t,x,\o) \leq  -\frac{1}{2}|t-s|\textrm{ for all }\o\textrm{ such that }\o\cdot\o_0\leq \frac{1}{4}.$$
Together with \eqref{ie5}, we finally obtain for all $\o\in\S$
\be\lab{ie6}
|\phi(t,x,s,y,\o)|\geq \frac{1}{8}|t-s||\o-\o_0|^2
\ee
which is the desired estimate \eqref{keylowb2}.

\subsection{A lower bound for $|\phi|$ when $(s,y)\in A_{ext}$ (proof of \eqref{keylowb3} \eqref{keylowb4})}

Let $m_0$ defined in \eqref{def:m0}. Let $\o_0\in\S$ an angle satisfying \eqref{def:om0}. Note that $m_0>0$ since $(s,y)\in A_{ext}$. In particular, proceeding as in section \ref{sec:cccc}, we obtain the following analog of \eqref{ie4}
\be\lab{eo}
u(s,y,\o)-u(t,x,\o) =  -\frac{1}{2}|t-s||\o-\o_0|^2(1+O(\ep))+m_0(\o\cdot\o_0+O(\ep)).
\ee
Recall the definition \eqref{eo1} of the set $D$
$$D=\{\o\in\S\,/\, u(t,x,\o)=u(s,y,\o)\}.$$
In view of \eqref{eo}, if $\o_1\in D$, then
\be\lab{oe2}
1-\o_1\c\o_0=\frac{1+O(\ep)}{1+\frac{|t-s|}{m_0}(1+O(\ep))}.
\ee

Next, we consider $\o_1\in D$. In view of the assumption \eqref{regpoN}, the $2\times 2$ matrix
$$\gg(\po N, \po N)$$
is invertible. We define the vector of $T_{\o_1}\S$ $a_1$ as
\be\lab{oobis}
a_1=\gg(\po N, \po N)^{-1}\left(\po u(s,y,\o_1)-\po u(\gamma_{\o_1}(s-t),\o_1)\right).
\ee
Let us consider the curve $\eta(\sigma)$ defined by
\be\lab{ooter}
\left\{\ba{lll}
\eta'(\sigma)&=&b(\eta(\sigma),\o_1)a_1\c\po N(\eta(\sigma),\o_1),\\ 
\eta(0)&=&\gamma_{\o_1}(s-t).
\ea\right.
\ee

\begin{remark}
In the flat case, the curve $\eta$ is simply the segment of straight line between $\gamma_{\o_1}(s-t)$ and $(s,y)$ (see figure \ref{fig5}).
\end{remark}
 \begin{figure}[b]
\vspace{-3.5cm}
\begin{center}
\hspace{-4.0005cm}\includegraphics[width=20cm, height=15cm]{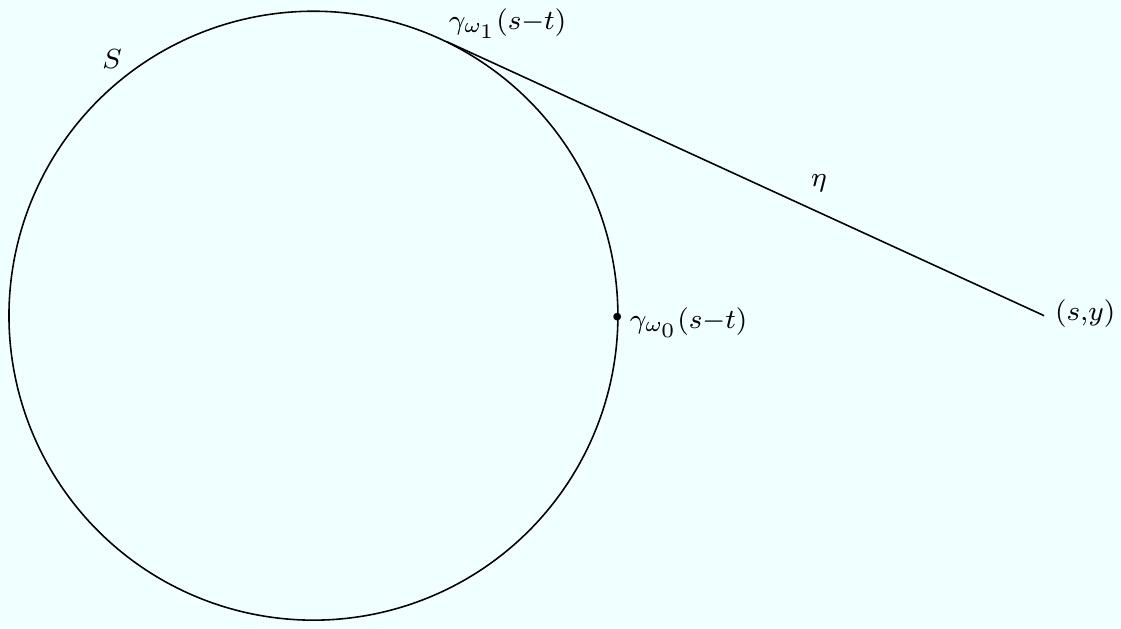}
\vspace{-7.5cm}
\caption{The case $(s,y)\in A_{ext}$ in the flat case}\lab{fig5}
\end{center}
\end{figure}

\begin{lemma}\lab{lemma:appendixd}
Let $\eta$ the curve defined in \eqref{ooter}. Then, we have 
\be\lab{eq:appendixd}
(s,y)=\eta(1).
\ee
\end{lemma}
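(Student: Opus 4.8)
\textbf{Proof proposal for Lemma \ref{lemma:appendixd}.}

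The plan is to mimic exactly the argument used for Lemma \ref{lemma:appendixc}, transported to the point $\o_1\in D$ instead of $\o_0$. First I would observe that by the definition \eqref{ooter} the curve $\eta$ stays on the level hypersurface $\H_{u(t,x,\o_1)}$ of $u(\cdot,\o_1)$: indeed $\eta'(\sigma)$ is proportional to $\po N(\eta(\sigma),\o_1)$, which by \eqref{identities1} is $\gg$-orthogonal to $N(\eta(\sigma),\o_1)$ and hence (using $L=T+N$ and $L(u)=0$) orthogonal to $\nabla u(\eta(\sigma),\o_1)$; so $\frac{d}{d\sigma}u(\eta(\sigma),\o_1)=\gg(\nabla u,\eta')=0$. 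Since $\eta(0)=\gamma_{\o_1}(s-t)$ and by \eqref{idgeod} $u(\gamma_{\o_1}(s-t),\o_1)=u(t,x,\o_1)$, we get $u(\eta(\sigma),\o_1)=u(t,x,\o_1)=u(s,y,\o_1)$ for all $\sigma$, the last equality because $\o_1\in D$. Thus both $\eta(1)$ and $(s,y)$ lie on the same leaf $\{u(\cdot,\o_1)=u(s,y,\o_1)\}$ of $\Sigma_s$.

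Next I would use the second coordinate $\po u(\cdot,\o_1)$ to pin down the parameter. Differentiating along $\eta$ and using $L(\po u)=0$ (identity \eqref{identities}) together with $L=bL'$, $\nabla\po u$ and the decomposition of $\eta'$ purely in the $\po N$ direction, one computes $\frac{d}{d\sigma}\po u(\eta(\sigma),\o_1)=\gg(\po N,\po N)\,b^{-1}\cdot(b\,a_1)+O(\ep)=a_1+O(\ep)$ after contracting with the matrix $\gg(\po N,\po N)$ as in \eqref{iebis}; integrating from $0$ to $1$ and recalling the definition \eqref{oobis} of $a_1$ gives $\po u(\eta(1),\o_1)=\po u(\gamma_{\o_1}(s-t),\o_1)+\gg(\po N,\po N)^{-1}\big(\po u(s,y,\o_1)-\po u(\gamma_{\o_1}(s-t),\o_1)\big)$, up to the $O(\ep)$ error absorbed by the global-coordinate construction. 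So $\po u(\eta(1),\o_1)=\po u(s,y,\o_1)$. Since by section \ref{sec:assumptioncoord} the pair $(u(\cdot,\o_1),\po u(\cdot,\o_1))$ is a global coordinate system on $\Sigma_s$, the two conditions $u(\eta(1),\o_1)=u(s,y,\o_1)$ and $\po u(\eta(1),\o_1)=\po u(s,y,\o_1)$ force $\eta(1)=(s,y)$, which is \eqref{eq:appendixd}.

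The main obstacle is the bookkeeping of the $O(\ep)$ terms: the definitions \eqref{oobis}, \eqref{ooter} are built so that the linearized equation for $\po u(\eta(\sigma),\o_1)$ has constant right-hand side $a_1$, but the exact equation has $\sigma$-dependent coefficients $b(\eta(\sigma),\o_1)$ and $\gg(\po N,\po N)(\eta(\sigma),\o_1)$ differing from their reference values by $O(\ep)$, and a priori one does not know the length of the $\sigma$-interval is $O(1)$. I would handle this exactly as in Lemma \ref{lemma:appendixc}: set up the map $\sigma\mapsto \po u(\eta(\sigma),\o_1)$, show it is a $C^1$ perturbation of the identity (using \eqref{regb}, \eqref{regpoN} and \eqref{iequatre}-type bounds on $a_1$, which follow since $|\po u(s,y,\o_1)-\po u(\gamma_{\o_1}(s-t),\o_1)|\lesssim 1$ on the unit-scale region, here one needs $\gamma_{\o_1}(s-t)$ and $(s,y)$ to be comparably close, which is where $m_0$ and \eqref{oe2} enter), and invoke a continuity/bootstrap argument to conclude the curve reaches $(s,y)$ precisely at $\sigma=1$. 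All of this is routine once the orthogonality identity $\gg(N,\po N)=0$ is in hand; I expect no genuinely new difficulty beyond what already appears in the proof of Lemma \ref{lemma:appendixc}.
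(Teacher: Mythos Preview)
Your overall strategy matches the paper's exactly: show that $u(\eta(\sigma),\o_1)$ is constant, compute $\po u(\eta(\sigma),\o_1)$ along $\eta$, and then invoke the global coordinate system of section~\ref{sec:assumptioncoord}. The first paragraph is correct and is precisely what the paper does.

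The gap is in your second step. You claim $\frac{d}{d\sigma}\po u(\eta(\sigma),\o_1)=a_1+O(\ep)$ and then spend the third paragraph worrying about how to absorb these $O(\ep)$ errors. In fact there are no such errors: the computation is \emph{exact}. Writing $\nabla\po u=-b^{-2}\po b\,N+b^{-1}\po N$ and pairing with $\eta'=b\,a_1\cdot\po N$, the $N$--term drops by \eqref{identities1} and you are left with
\[
\frac{d}{d\sigma}\po u(\eta(\sigma),\o_1)=\gg(\po N,\po N)\,a_1,
\]
all quantities evaluated at $(\eta(\sigma),\o_1)$. The point of the definition \eqref{oobis} is that $a_1$ carries the factor $\gg(\po N,\po N)^{-1}$ evaluated at the \emph{same} running point, so the product is identically the constant vector $\po u(s,y,\o_1)-\po u(\gamma_{\o_1}(s-t),\o_1)$, with no approximation. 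Integrating from $0$ to $1$ then gives $\po u(\eta(1),\o_1)=\po u(s,y,\o_1)$ on the nose; your displayed formula with the extra $\gg(\po N,\po N)^{-1}$ factor is simply wrong, and the entire bootstrap/continuity discussion is unnecessary. Once you correct this, the proof collapses to the same four-line argument as in the paper.
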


The proof of Lemma \ref{lemma:appendixd} is postponed to Appendix \ref{sec:appendixd}. \eqref{eq:appendixd}  yields
\bee
 u(s,y,\o)-u(t,x,\o) &=& u(\eta(1),\o)-u(\eta(0),\o)+u(\gamma_{\o_1}(s-t),\o)-u(t,x,\o)\\
&=& \int_0^1\gg(\nabla u(\eta(\sigma),\o), \eta'(\sigma))d\sigma +\int_t^s\gg(\dd u(\gamma_{\o_1}(\sigma),\o), \gamma_{\o_o}'(\sigma))d\sigma\\
&=& \int_0^1 b^{-1}(\eta(\sigma),\o)b(\eta(\sigma),\o_1)\gg(N(\eta(\sigma),\o), a_1\c\po N(\eta(\sigma),\o_1))\\
&&+\int_t^sb^{-1}(\gamma_{\o_1}(\sigma),\o)b^{-1}(\gamma_{\o_1}(\sigma),\o_1)\gg(L(\gamma_{\o_1}(\sigma),\o),L(\gamma_{\o_1}(\sigma),\o_1)).
\eee
We obtain
\bee
 u(s,y,\o)-u(t,x,\o) &= & \int_0^1\gg(N(\eta(\sigma),\o)-N(\eta(\sigma),\o_1), a_1\c\po N(\eta(\sigma),\o_1))(1+O(\ep))\\
&& -\frac{1}{2}\int_t^s|N(\gamma_{\o_1}(\sigma),\o)-N(\gamma_{\o_1}(\sigma),\o_1)|^2(1+O(\ep))d\sigma
\eee
where we used the estimates \eqref{regb} and the identity \eqref{identities1}. Together with the assumptions \eqref{regpoN} and \eqref{ad1}, this yields
\bea\lab{oo3}
u(s,y,\o)-u(t,x,\o) &=&  -\frac{1}{2}|t-s||\o-\o_1|^2(1+O(\ep))\\
\nn &+& (\o-\o_1)\c(\po u(s,y,\o_1)-\po u(\gamma_{\o_1}(s-t),\o_1))(1+O(\ep)).
\eea

We introduce the notation $v_0$ for the following vector in $\R^3$. 
\be\lab{oo4}
v_0=\po u(s,y,\o_1)-\po u(\gamma_{\o_1}(s-t),\o_1).
\ee
Recall that $\o_0\in\S$ is an angle satisfying \eqref{def:om0}. In view of \eqref{oo3}, this yields
$$m_0= -\frac{1}{2}|t-s||\o_0-\o_1|^2(1+O(\ep))+(\o_0-\o_1)\c v_0(1+O(\ep)).$$
We deduce
\be\lab{oo5}
|v_0|=\frac{1}{|\o_0-\o_1|\cos(\a_1)}\left(m_0+\frac{1}{2}|t-s||\o_0-\o_1|^2(1+O(\ep))\right)(1+O(\ep)),
\ee
where $\a_1$ denotes the angle between $v_0$ and $\o_1-\o_0$. Let us denote by $\theta_1$ the angle between $\o_0$ and $\o_1$. In view of \eqref{oe2}, we have
\be\lab{oo6}
1-\cos(\theta_1)=\frac{1+O(\ep)}{1+\frac{|t-s|}{m_0}(1+O(\ep))}
\ee
and we deduce in particular
\be\lab{oo7}
0<\theta_1\leq\frac{\pi}{2}+O(\ep).
\ee
Note also in view of the definition \eqref{oo4} that $v_0$ belongs to $T_{\o_1}\S$ so that 
\be\lab{oo8}
v_0\c\o_1=0.
\ee
Simple considerations on angles imply\footnote{Let $\varphi_1$ the angle defined on figure \ref{fig6}. Then $2\varphi_1+\th_1=\pi$, and $\varphi_1+\alpha_1=\frac{\pi}{2}$ in view of \eqref{oo8}. Hence $\th_1=2\alpha_1$} (see figure \ref{fig6})
$$\alpha_1=\frac{\theta_1}{2}.$$
 \begin{figure}[t]
\vspace{-4cm}
\begin{center}
\hspace{-4.0005cm}\includegraphics[width=20cm, height=14.5cm]{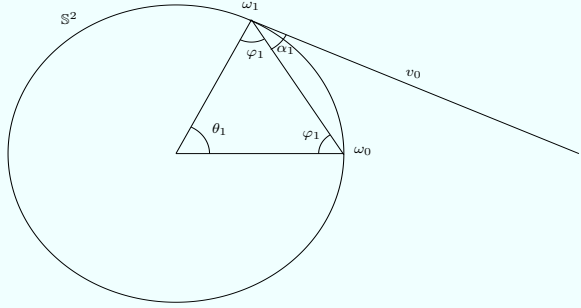}
\vspace{-8cm}
\caption{Definition of the angles $\th_1$ and $\alpha_1$}\lab{fig6}
\end{center}
\end{figure}
Together with \eqref{oo5}, this yields
\be\lab{oo9}
|v_0|=\frac{1}{|\o_0-\o_1|\cos\left(\frac{\theta_1}{2}\right)}\left(m_0+\frac{1}{2}|t-s||\o_0-\o_1|^2(1+O(\ep))\right)(1+O(\ep)).
\ee

Let $\o\in\S$. According to Lemma \ref{lemma:D}, the half great circle on $\S$ originating at $\o_0$ and containing $\o$ intersects  $D$ at a unique point $\o_1$. Let $\th$ denote  the positive angle between $\o_0$ and $\o$ and let $\alpha$ denote the angle between $v_0$ and $\o_1-\o$. In view of \eqref{oo8}, 
simple considerations on angles imply\footnote{Let $\varphi$ the angle defined on figure \ref{fig7}. Then $2\varphi+|\th_1-\th|=\pi$, and $\varphi+\alpha=\frac{\pi}{2}$ in view of \eqref{oo8}. Hence $|\th_1-\th|=2\alpha$} (see figure \ref{fig7})
$$\alpha=\frac{|\theta_1-\th|}{2}.$$
 \begin{figure}[t]
\vspace{-3.8cm}
\begin{center}
\hspace{-4.0005cm}\includegraphics[width=20cm, height=15cm]{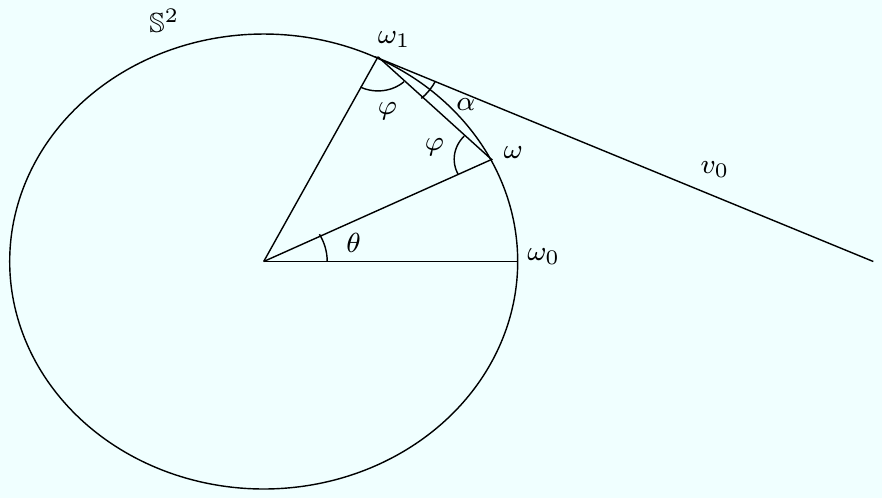}
\vspace{-8.3cm}
\caption{Definition of the angles $\th$ and $\alpha$}\lab{fig7}
\end{center}
\end{figure}
Together with \eqref{oo3}, the definition of $\phi$, and the definition \eqref{oo4}, we have either
\be\lab{oo10}
\phi(t,x,s,y,\o) =  -\frac{1}{2}|t-s||\o-\o_1|^2(1+O(\ep))+|\o-\o_1||v_0|\cos\left(\frac{\theta-\theta_1}{2}\right)(1+O(\ep)),
\ee
if $0\leq \theta\leq \theta_1$, or 
\be\lab{oo11}
\phi(t,x,s,y,\o) =  -\frac{1}{2}|t-s||\o-\o_1|^2(1+O(\ep))-|\o-\o_1||v_0|\cos\left(\frac{\theta-\theta_1}{2}\right)(1+O(\ep)),
\ee
if $\theta_1\leq\theta\leq\pi$, where we have used the fact that (see figure \ref{fig7})
$$(\o-\o_1)\cdot v_0\geq 0\textrm{ if }0\leq\th\leq\th_1\textrm{ and }(\o-\o_1)\cdot v_0<0\textrm{ if }\th_1<\th\leq\pi.$$
We consider the two cases in the next two sections.

\subsubsection{The case $\theta_1\leq\theta\leq\pi$}

We are in the case \eqref{oo11}, so that we have for $\ep>0$ small enough
$$\phi(t,x,s,y,\o) \leq  -\frac{1}{4}|t-s||\o-\o_1|^2.$$
In particular, we obtain 
\be\lab{oo12}
|\phi(t,x,s,y,\o)| \geq  \frac{1}{4}|t-s||\o-\o_1|^2
\ee
which is the desired estimate \eqref{keylowb3}. 

\subsubsection{The case $0\leq \theta\leq \theta_1$}

We are in the case \eqref{oo10}, which together with \eqref{oo9} yields
\bea\lab{oo13}
 \phi(t,x,s,y,\o) &=& \frac{|\o-\o_1|}{|\o_0-\o_1|}\frac{\cos\left(\frac{\theta-\theta_1}{2}\right)}{\cos\left(\frac{\theta_1}{2}\right)}m_0
+\frac{1}{2}|t-s||\o-\o_1| A(\o)(1+O(\ep))\\
\nn&&+|t-s||\o-\o_1|^2O(\ep) 
\eea
where $A$ is given by
\be\lab{oo14}
A(\o)=-|\o-\o_1|+\frac{\cos\left(\frac{\theta-\theta_1}{2}\right)}{\cos\left(\frac{\theta_1}{2}\right)}|\o_0-\o_1|.
\ee
Since $\theta$ is the angle between $\o$ and $\o_0$, and $\theta_1$ is the angle between $\o_1$ and $\o_0$, we have
\be\lab{oo14bis}
|\o-\o_1|=\sqrt{2}\sqrt{1-\cos(\theta-\theta_1)},\, |\o_1-\o_0|=\sqrt{2}\sqrt{1-\cos(\theta_1)}.
\ee
Together with \eqref{oo14}, we obtain
$$A(\o)=\sqrt{2}\sqrt{1+\cos(\theta_1)}\left(\frac{\sqrt{1-\cos(\theta_1)}}{\sqrt{1+\cos(\theta_1)}}-\frac{\sqrt{1-\cos(\theta-\theta_1)}}{\sqrt{1+\cos(\theta-\theta_1)}}\right)$$
which yields
$$A(\o)\geq 0\textrm{ for all }0\leq\theta\leq\theta_1.$$
Together with \eqref{oo13}, we obtain
$$\phi(t,x,s,y,\o) \geq \frac{|\o-\o_1|}{|\o_0-\o_1|}\cos\left(\frac{\theta-\theta_1}{2}\right)m_0+|t-s||\o-\o_1|^2O(\ep).$$
In view of the fact that from \eqref{oo7} we have
$$0\leq \theta\leq \theta_1\leq\frac{\pi}{2}+O(\ep),$$
we deduce
\be\lab{presque}
\phi(t,x,s,y,\o) \geq \frac{\sqrt{2}}{2}\frac{|\o-\o_1|}{|\o_0-\o_1|}m_0+|t-s||\o-\o_1|^2O(\ep).
\ee
Evaluating \eqref{eo} at $\o=\o_1$ and using the fact that $\o_1\in D$ so that the left-hand side of \eqref{eo} vanishes, we obtain
$$|t-s||\o_1-\o_0|^2\les m_0$$
which together with \eqref{presque} and the fact that
$$|\o-\o_1|\leq |\o_0-\o_1|$$ 
yields for $\ep>0$ small enough
$$\phi(t,x,s,y,\o) \gtrsim \frac{|\o-\o_1|}{|\o_0-\o_1|}m_0.$$
In view of \eqref{oo14bis}, we deduce
\be\lab{oo15}
\phi(t,x,s,y,\o) \gtrsim \sqrt{\frac{1-\cos(\theta-\theta_1)}{1-\cos(\theta_1)}}m_0
\ee
which is the desired estimate \eqref{keylowb4}.\\

Finally, in view of \eqref{eee1}, \eqref{ie6}, \eqref{oo12} and \eqref{oo15}, we have obtained the desired estimates \eqref{keylowb1}, \eqref{keylowb2}, \eqref{keylowb3} and \eqref{keylowb4}. This concludes the proof of Lemma \ref{lemma:key}. 

\appendix

\section{Proof of Lemma \ref{lemma:S}}\lab{sec:lemmaS}

If $p\in S$, then, there is $\o_0$ such that 
$$p=\gamma_{\o_0}(s-t).$$
In view of \eqref{idgeod}, this yields
\bea\lab{ddd}
u(p,\o)-u(t,x,\o) &=& u(\gamma_{\o_0}(s-t),\o)-u(t,x,\o)\\
\nn&=& \int_0^{s-t}\gg(\dd u, \gamma_{\o_0}'(\sigma))d\sigma\\
\nn&=&  \int_0^{s-t}b^{-1}(\gamma_{\o_0}(\sigma),\o)\gg(L(\gamma_{\o_0}(\sigma),\o), L(\gamma_{\o_0}(\sigma),\o_0)d\sigma\\
\nn&\leq& 0
\eea 
where we used in the last inequality the fact that the scalar product of 2 null vectors is negative
$$\gg(L(\gamma_{\o_0}(\sigma),\o), L(\gamma_{\o_0}(\sigma),\o_0)\leq 0.$$
Arguing as in \eqref{ddd} in the special case $\o=\o_0$, we obtain
\bea\lab{ddd1}
u(p,\o_0)-u(t,x,\o_0) &=& u(\gamma_{\o_0}(s-t),\o_0)-u(t,x,\o_0)\\
\nn&=& \int_0^{s-t}\gg(\dd u, \gamma_{\o_0}'(\sigma))d\sigma\\
\nn&=&  \int_0^{s-t}b^{-1}(\gamma_{\o_0}(\sigma),\o_0)\gg(L(\gamma_{\o_0}(\sigma),\o_0), L(\gamma_{\o_0}(\sigma),\o_0))d\sigma\\
\nn& = & 0
\eea 
since $L(\gamma_{\o_0}(\sigma),\o_0)$ is null. In view of \eqref{ddd} and \eqref{ddd1}, we finally obtain
\be\lab{ddd2}
S\subset\{p\in\Sigma_s,\,/\, m(p)=0\}.
\ee

Conversely, let $p$ such that $m(p)=0$. Let $\o_0\in\S$ an angle where the max in the definition \eqref{def:m} of $m$ is attained. Then, we have at $\o=\o_0$:
\be\lab{ddd3}
u(p,\o_0)=u(t,x,\o_0),\, \po u(p,\o_0)=\po u(t,x,\o_0).
\ee
Also, in view of \eqref{identities}, we have
$$u(\gamma_{\o_0}(s-t),\o_0)=u(t,x,\o_0),\, \po u(\gamma_{\o_0}(s-t),\o_0)=\po u(t,x,\o_0)$$
which together with \eqref{ddd3} implies
$$u(p,\o_0)=u(\gamma_{\o_0}(s-t),\o_0),\, \po u(p,\o_0)=\po u(\gamma_{\o_0}(s-t),\o_0).$$
Since $u(s,.,\o_0), \po u(s,.,\o_0)$ forms a global coordinate system on $\Sigma_s$ in view of the assumption in section \ref{sec:assumptioncoord}, we deduce
$$p=\gamma_{\o_0}(s-t)\in S$$
and thus 
$$\{p\in\Sigma_s,\,/\, m(p)=0\}\subset S.$$
Together with \eqref{ddd2}, this concludes the proof of Lemma \ref{lemma:S}.

\section{Proof of Lemma \ref{lemma:D}}\lab{sec:lemmaD}

Let $(\theta, \varphi)$ denote the spherical coordinates with axis $\o_0$. Note from the definition \eqref{eo1} of $D$ and the definition of $\phi$ that $D$ is given by
\be\lab{dodo}
D=\{\o\in\S,\,/\,\phi(t,x,s,y,\o)=0\}.
\ee
Recall that 
$$\phi(t,x,s,y,\o_0)=m_0>0.$$
Also, we have from \eqref{eo}
$$\phi(t,x,s,y,-\o_0)=-2|t-s|^2(1+O(\ep))+m_0(-1+O(\ep))<0.$$
Thus, since $\phi$ is continuous, we deduce from the mean value theorem that 
\be\lab{dodobis}
\forall\varphi\in [0,2\pi),\textrm{ there exists at least one }\theta_1\in(0,\pi)\textrm{ such that }(\theta_1, \varphi)\in D.
\ee
Also, note in view of \eqref{oo12} and \eqref{oo15}, 
$$\forall\varphi\in [0,2\pi),\textrm{ there exists at most one }\theta_1\in(0,\pi)\textrm{ such that }(\theta_1, \varphi)\in D.$$
Together with \eqref{dodobis}, we deduce the existence of a $2\pi$-periodic function 
$$\theta_1:[0,2\pi)\rightarrow (0,\pi)$$
such that in the coordinate system $(\theta, \varphi)$, $D$ is parametrized by
$$D=\{\theta=\theta_1(\varphi),\, 0\leq\varphi<2\pi\}.$$

To conclude the proof of Lemma \ref{lemma:D}, it remains to prove that $\theta_1$ is $C^1$. Let $\o_1\in D$. Let $(\theta_1, \varphi_1)$ the coordinates of $\o_1$. By a slight abuse of notations, let us identify $\o_1$ with $(\theta_1, \varphi_1)$. Then, since 
$$\o\rightarrow \phi(t,x,s,y,\o)$$
is a $C^1$ function from our assumptions on $u$, and since 
$$\phi(t,x,s,y,\o_1)=0$$
in view of the fact that $\o_1$ belongs to $D$, we have
\be\lab{dodo1}
\partial_\theta\phi(t,x,s,y,\o_1)=\lim_{\theta\rightarrow\theta_1}\frac{\phi(t,x,s,y,\theta,\varphi_1)}{\theta-\theta_1}.
\ee
Now, \eqref{oo10} and \eqref{oo11} imply
$$\phi(t,x,s,y,\theta,\varphi_1)=|v_0|(1+O(\ep))(\theta-\theta_1)(1+o(1))\textrm{ as }\theta\rightarrow\theta_1$$
which together with \eqref{dodo1} yields
\be\lab{dodo2}
\partial_\theta\phi(t,x,s,y,\o_1)\neq 0.
\ee
Finally, in view of \eqref{dodo2} and the fact that $\phi$ is $C^1$, the implicit function theorem implies that $\theta_1$ is a $C^1$ function. This concludes the proof of Lemma \ref{lemma:D}.

\section{Proof of Lemma \ref{lemma:appendixc}}\lab{sec:appendixc}

Note that 
\bee
u(\mu(\sigma),\o_0)' &=& \gg(\dd u(\mu(\sigma),\o_0), \mu'(\sigma))\\
&=& 1
\eee
by the definition of $\dd u$, $\mu'$, the identity \eqref{identities1}, and the fact that $N$ is unitary. This implies
\be\lab{ie1}
u(\mu(\sigma),\o_0)=\sigma+u(\gamma_{\o_0}(t-s),\o_0)
\ee
Also, we have
\bea\lab{ie2}
\po u(\mu(\sigma),\o_0)' &=& \gg(\dd \po u(\mu(\sigma),\o_0), \mu'(\sigma))\\
\nn&=& \gg(-b^{-2}\po b N(\mu(\sigma),\o_0)+b^{-1}\po N(\mu(\sigma),\o_0), \mu'(\sigma))\\
\nn&=& 0,
\eea
where we used in the last inequality the definition \eqref{ieter} of $\mu'$ and the definition \eqref{iebis} of $a$. 

Recall the definition \eqref{def:m0} of $m_0$ 
\be\lab{ie3}
m_0=u(s,y,\o_0)-u(t,x,\o_0).
\ee
In view of \eqref{ie} and \eqref{ieter}-\eqref{ie3}, we have
$$u(\mu(m_0),\o_0)=u(s,y,\o_0),\, \po u(\mu(m_0),\o_0)=\po u(s,y,\o_0).$$
Since $u(s,.,\o_0), \po u(s,.,\o_0)$ forms a global coordinate system on $\Sigma_s$ in view of the assumption in section \ref{sec:assumptioncoord}, we deduce
$$(s,y)=\mu(m_0)$$
which is the desired estimate. This concludes the proof of Lemma \ref{lemma:appendixc}.

\section{Proof of Lemma \ref{lemma:appendixd}}\lab{sec:appendixd}

Note that 
\bee
u(\eta(\sigma),\o_1)' &=& \gg(\dd u(\eta(\sigma),\o_1), \eta'(\sigma))\\
&=& 0
\eee
by the definition of $\dd u$, $\eta'$ and the identity \eqref{identities1}. This implies
$$u(\eta(\sigma),\o_1)=u(\gamma_{\o_1}(s-t),\o_1)=u(t,x,\o_1),$$
which together with the fact that $\o_1\in D$ implies from the definition \eqref{eo1} of $D$
\be\lab{oo1}
u(\eta(\sigma),\o_1)=u(s,y,\o_1)\textrm{ for all }\sigma\in\R.
\ee
Also, we have
\bee
\po u(\eta(\sigma),\o_1)' &=& \gg(\dd \po u(\eta(\sigma),\o_1), \eta'(\sigma))\\
\nn&=& \gg(-b^{-2}\po b N(\eta(\sigma),\o_1)+b^{-1}\po N(\eta(\sigma),\o_1), \eta'(\sigma))\\
\nn&=& \po u(s,y,\o_1)-\po u(\gamma_{\o_1}(s-t),\o_1),
\eee
where we used in the last inequality the definition \eqref{ooter} of $\eta_{\o}'$ and the definition \eqref{oobis} of $a_1$. This implies
\be\lab{oo2}
\po u(\eta(\sigma),\o_1)=\po u(\gamma_{\o_1}(s-t),\o_1)+\sigma(\po u(s,y,\o_1)-\po u(\gamma_{\o_1}(s-t),\o_1)).
\ee
In view of \eqref{oo1} and \eqref{oo2}, we have
$$u(\eta(1),\o_1)=u(s,y,\o_1),\, \po u(\eta(1),\o_1)=\po u(s,y,\o_1).$$
Since $u(s,.,\o_1), \po u(s,.,\o_1)$ forms a global coordinate system on $\Sigma_s$ in view of the assumption in section \ref{sec:assumptioncoord}, we deduce
$$(s,y)=\eta(1)$$
which is the desired estimate. This concludes the proof of Lemma \ref{lemma:appendixd}.

\end{document}